
\documentclass[12pt]{article}

\usepackage{amsmath,amsthm}
\usepackage{amssymb}

\usepackage{enumitem}

\usepackage[T1]{fontenc}

\usepackage[all]{xy}


\pagestyle{myheadings}
\markboth{S. Marseglia}{ Super-multiplicativity of ideal norms in number fields }



\newtheorem{theorem}{Theorem}[section]
\newtheorem{corollary}[theorem]{Corollary}
\newtheorem{lemma}[theorem]{Lemma}
\newtheorem{proposition}[theorem]{Proposition}



\theoremstyle{definition}
\newtheorem{definition}[theorem]{Definition}
\newtheorem{remark}[theorem]{Remark}
\newtheorem{example}[theorem]{Example}



\numberwithin{equation}{section}

%
\frenchspacing

\textwidth=13.5cm
\textheight=23cm
\parindent=16pt
\topmargin=-0.5cm




\def\Z{\mathbb{Z}}
\def\Q{\mathbb{Q}}
\def\F{\mathbb{F}}
\newcommand{\vphi}{{\varphi}}
\newcommand{\p}{{\mathfrak p}}
\newcommand{\q}{{\mathfrak q}}
\renewcommand{\frm}{{\mathfrak m}}
\newcommand{\frn}{{\mathfrak n}}

\newcommand{\frM}{{\mathfrak M}}

\newcommand{\cO}{{\mathcal O}}
\newcommand{\set}[1]{\left\lbrace#1\right\rbrace }
\newcommand{\Span}[1]{\left<#1\right>}
\DeclareMathOperator{\Frac}{Frac}
\DeclareMathOperator{\ord}{ord}


\begin{document}




\title{Super-multiplicativity of ideal norms in number fields}

\author{Stefano Marseglia}
\AtEndDocument{\vspace{2cm}\noindent
Department of Mathematics\\
Stockholm University\\
SE - 106 91 Stockholm, Sweden.\\
e-mail: stefanom@math.su.se
}

\date{}

\maketitle


\renewcommand{\thefootnote}{}

\footnote{2010 \emph{Mathematics Subject Classification}: Primary 13A15; Secondary 11R21, 11R54}

\footnote{\emph{Key words and phrases}: ideal norm, number ring, number field.}

\renewcommand{\thefootnote}{\arabic{footnote}}
\setcounter{footnote}{0}


\begin{abstract}
In this article we study inequalities of ideal norms. We prove that in a subring $R$ of a number field every ideal can be generated by at most $3$ elements if and only if the ideal norm satisfies $N(IJ)\geq N(I)N(J)$ for every pair of non-zero ideals $I$ and $J$ of every ring extension of $R$ contained in the normalization $\tilde R$.
\end{abstract}

\section{Introduction}
When we are studying a \emph{number ring} $R$, that is a subring of a number field $K$, it can be useful to understand the size of its ideals compared to the whole ring. The main tool for this purpose is the norm map which associates to every non-zero ideal $I$ of $R$ its index as an abelian subgroup $N(I)=[R:I]$.
If $R$ is the \emph{maximal order}, or \emph{ring of integers}, of $K$ then this map is multiplicative, that is, for every pair of non-zero ideals $I,J\subseteq R$ we have $N(I)N(J)=N(IJ)$. If the number ring is not the maximal order this equality may not hold for some pair of non-zero ideals. For example, if we consider the quadratic order $\Z[2i]$ and the ideal $I=(2,2i)$, then we have that $N(I)=2$ and $N(I^2)=8$, so we have the inequality $N(I^2)> N(I)^2$.
Observe that if every maximal ideal $\p$ of a number ring $R$ satisfies $N(\p^2)\leq N(\p)^2$, then we can conclude that $R$ is the maximal order of $K$ (see Corollary \ref{cor:submdedekind}).

In Section \ref{sec:prelim} we recall some basic commutative algebra and algebraic number theory and we apply them to see how the ideal norm behaves in relation to localizations and ring extensions.

In Section \ref{sec:quadquartcases} we will see that the inequality in the previous example is not a coincidence. More precisely, we will prove that in any quadratic order we have $N(IJ)\geq N(I)N(J)$ for every pair of non-zero ideals $I $ and $J$.
We will say that the norm is \emph{super-multiplicative} if this inequality holds for every pair of non-zero ideals (see Definition \ref{def:sm}).
We will show that this is not always the case by exhibiting an a order of degree $4$ where we have both (strict) inequalities, see Example \ref{ex:degree4}.

In a quadratic order every ideal can be generated by $2$ elements and in a order of degree $4$ by $4$ elements, so we are led to wonder if the behavior of the norm is related to the number of generators and what happens in a cubic order, or more generally in a number ring in which every ideal can be generated by 3 elements.

The main result of this work is the following:
\begin{theorem}
\label{mainthm}
Let $R$ be a number ring. The following statements are equivalent:
\begin{enumerate}[label=\upshape(\roman*), leftmargin=*, widest=iii]
\item \label{impl:1} every ideal of $R$ can be generated by $3$ elements;
\item \label{impl:2} every ring extension $R'$ of $R$ contained in the normalization of $R$ is super-multiplicative.
\end{enumerate}
\end{theorem}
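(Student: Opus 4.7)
The plan is to reduce the theorem to a local statement via the decomposition $N(I) = \prod_{\mathfrak{m}} [R_\mathfrak{m} : I R_\mathfrak{m}]$ recalled in Section~\ref{sec:prelim}; both the 3-generator condition and super-multiplicativity localize at the maximal ideals, so it suffices to analyze a one-dimensional Noetherian local domain $A$ arising as a localization of some $R'$ with $R \subseteq R' \subseteq \tilde R$. Tensoring $0 \to I \to A \to A/I \to 0$ with $A/J$ produces the exact sequence
\[
0 \to (I\cap J)/IJ \to I/IJ \to (I+J)/J \to 0,
\]
and combined with $[A:J] = [A:I+J]\cdot[(I+J):J]$ this shows that $N(IJ) \geq N(I)N(J)$ is equivalent to the cleaner inequality $[(I\cap J):IJ] \geq [A:I+J]$. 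I would phrase the rest of the proof in these terms.

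For \ref{impl:1}$\Rightarrow$\ref{impl:2} the argument splits in two. First I would show that the property ``every ideal is 3-generated'' transfers from $R$ to every ring $R'$ with $R \subseteq R' \subseteq \tilde R$: since $\tilde R$ is a finite $R$-module, this reduces locally to controlling the minimal number of generators of an ideal of $R'$ in terms of that of its contraction to $R$, together with the fact that extending scalars along a finite local overring can only preserve, not increase, the embedding dimension in the situations relevant to number rings. Second I would establish the local inequality $[(I\cap J):IJ] \geq [A:I+J]$ for every one-dimensional Noetherian local domain in which all ideals are 3-generated. When one of $I, J$ is principal the two sides are naturally isomorphic, so the content is the genuinely 2- and 3-generated case, which I would try to handle by writing down the syzygies of the generators of $I$ and $J$ and using the coefficients appearing in these relations to produce explicit elements of $I\cap J$ that do not lie in $IJ$.

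For the converse \ref{impl:2}$\Rightarrow$\ref{impl:1} I would argue by contrapositive. If some ideal of $R$ genuinely needs at least 4 generators, I would localize to obtain a local domain $A$ with an ideal of the same property and exhibit two ideals $I, J$ of $A$ (or of some overring $A' \subseteq \tilde A$) for which the quotient $(I \cap J)/IJ$ is strictly smaller than $A/(I+J)$, following the pattern of Example~\ref{ex:degree4}. The quantification over all $R' \subseteq \tilde R$ in \ref{impl:2} is exactly what is needed to make such a construction available, because the violating ideals may naturally live in a proper overring of $R$.

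The main obstacle I anticipate is the core local inequality for 3-generated ideals. The quotient $(I \cap J)/IJ$ can be very small, so one has to squeeze elements out of the precise form of the syzygies of $I$ and $J$; it is at the transition from 3 to 4 generators that these syzygies become too unconstrained to guarantee enough elements in $I \cap J \setminus IJ$, and pinning down this transition rigorously, perhaps via a Hilbert--Burch-type description of the 3-generated case, will be the technical heart of the proof.
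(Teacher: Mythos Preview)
Your $\mathrm{Tor}$ reformulation $[(I\cap J):IJ]\geq [A:I+J]$ is correct and pleasant, but the proposal has genuine gaps on both directions.

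For \ref{impl:1}$\Rightarrow$\ref{impl:2}, two points. First, your transfer mechanism (contracting an $R'$-ideal to $R$ and comparing embedding dimensions) is not the right lever: the contraction $J\cap R$ can be much smaller than $J$ and there is no general bound on $\mu_{R'}(J)$ in terms of $\mu_R(J\cap R)$. The paper's argument is both simpler and different in kind: since $\tilde R/R$ has finite exponent, every $R'$-ideal is already a \emph{fractional $R$-ideal}, hence $3$-generated over $R$ and a fortiori over $R'$ (Remark~\ref{rmk:extension}). Second, the ``syzygy/Hilbert--Burch'' plan for the local inequality is not on firm ground: Hilbert--Burch concerns grade-$2$ perfect ideals, whereas here $A$ is one-dimensional and every nonzero ideal has grade $1$, so the structure theorem simply does not apply. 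The paper instead proves a small linear-algebra fact about surjections $U\otimes V\twoheadrightarrow W$ with $\dim W\leq 3$ (Lemma~\ref{lemma:linalg1}) to produce an $x\in I_\frm$ with $(IJ)_\frm/xJ_\frm$ cyclic (Lemma~\ref{lemma:cyclic}), and then compares two composition series of $R_\frm/xJ_\frm$ (Proposition~\ref{prop:length}). This ``find a good element, then count lengths'' strategy is the missing idea.

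For \ref{impl:2}$\Rightarrow$\ref{impl:1}, ``follow the pattern of Example~\ref{ex:degree4}'' is not enough of a plan: that example is a single ring, while here you must manufacture a counterexample from an \emph{arbitrary} $R$ with $g(R)\geq 4$. The paper supplies two ingredients you are missing. One is the identification $g(R_\frm)=\dim_k(\tilde R/\frm\tilde R)$ (Corollary~\ref{cor:weakmainthm}), which tells you that the obstruction lives in the finite $k$-algebra $A=\tilde R/\frm\tilde R$. The other is Lenstra's trichotomy for commutative $k$-algebras of dimension $\geq 4$ (Theorem~\ref{thm:lenstra}): either $A$ is of a very degenerate shape incompatible with $\tilde R$ being normal, or there exist $x,y$ with $1,x,y,xy$ linearly independent. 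From such $x,y$ one builds, in the specific overring $R'=R+\frm\tilde R$, the fractional ideals $I=(1,x,\frm\tilde R)$ and $J=(1,y,\frm\tilde R)$ that violate super-multiplicativity (Proposition~\ref{prop:3}). Without an analogue of this classification you have no mechanism to locate the violating pair, and indeed the last example of the paper shows the counterexample genuinely may have to live in $R'$ rather than in $R$ itself.
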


Theorem \ref{mainthm} is an immediate consequence of the following two stronger results, which are proved respectively in Sections \ref{sec:firstimpl} and \ref{sec:secondimpl}.
\begin{theorem}
 \label{thm:firstimpl}
 Let $R$ be a commutative Noetherian domain of dimension $1$ where every ideal can be generated by $3$ elements.
 Then $R$ is super-multiplicative.
 Moreover, every ring extension $R'$ of $R$  such that the additive group of $R'/R$ has finite exponent is also super-multiplicative.
\end{theorem}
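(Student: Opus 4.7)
The plan is to reduce super-multiplicativity of $R$ to a local length inequality via the results of Section~\ref{sec:prelim}, establish that inequality by embedding $R/J$ into $I/IJ$ via a suitable element of $I$, and then transfer the conclusion to $R'$ through the conductor. Since the ideal norm localizes at maximal ideals, I may assume $R$ is a $1$-dimensional Noetherian local domain with maximal ideal $\frm$, still with every ideal generated by at most $3$ elements. Working with the logarithmic norm $\ell_R(R/I)$, the short exact sequence
\[
0 \to I/IJ \to R/IJ \to R/I \to 0
\]
converts super-multiplicativity into the inequality $\ell_R(I/IJ) \geq \ell_R(R/J)$ for all nonzero ideals $I, J \subseteq R$.

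For this inequality I would produce an element $x \in I$ such that the $R$-linear map $R \to I/IJ$, $r \mapsto rx \bmod IJ$, factors through an injection $R/J \hookrightarrow I/IJ$. Since $xJ \subseteq IJ$ the map descends to $R/J \to I/IJ$ automatically; its injectivity is equivalent to the colon identity $(IJ:x) = J$. Producing such an $x$ immediately gives $\ell_R(I/IJ) \geq \ell_R(R/J)$.

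The hard part is producing $x$, and this is where the $3$-generator hypothesis enters. Writing $I = (x_1, x_2, x_3)$, any $r \in (IJ:I)$ gives elements $y_{ij} \in J$ with $rx_i = \sum_j y_{ij} x_j$; the determinant trick then forces $r$ to satisfy a monic polynomial of degree at most $3$ whose non-leading coefficients lie in the successive powers of $J$, so $r$ is integral over $J$. The technical core of the argument should refine this integrality statement---using the $1$-dimensional local structure of $R$ (notably the $\frm$-primary nature of $J$) together with a case analysis on whether $I$ genuinely needs $1$, $2$, or $3$ generators---to yield an actual $x$ with $(IJ:x) = J$. The hypothesis of at most $3$ generators is essential here, since more generators only produce higher-degree integrality relations that cannot in general be cut down to membership in $J$.

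For the extension part, let $R \subseteq R'$ with $R'/R$ of finite exponent. The conductor $\mathfrak{f} = \{r \in R : rR' \subseteq R\}$ then contains this exponent, hence is nonzero and $\frm$-primary at every localization. For nonzero $R'$-ideals $I'$ and $J'$, both $\mathfrak{f} I'$ and $\mathfrak{f} J'$ are $R$-ideals; applying super-multiplicativity of $R$ to them and then bookkeeping the contributions from the length of $R'/R$ and from quotients involving $\mathfrak{f}$, through short exact sequences relating the $R'$-norms of $I'$, $J'$, $I'J'$ to the $R$-norms of $\mathfrak{f} I'$, $\mathfrak{f} J'$, $\mathfrak{f}(I'J')$ and $\mathfrak{f}^2(I'J')$, transfers super-multiplicativity from $R$ to $R'$.
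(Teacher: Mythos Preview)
Your reduction to the local length inequality $\ell(I/IJ) \geq \ell(R/J)$ is correct, but the method you propose for proving it fails. You want an $x \in I$ with $(IJ:x) = J$, i.e.\ an embedding $R/J \hookrightarrow I/IJ$, and no such $x$ need exist. Take $R = k[[t^2, t^3]]$ (every ideal $2$-generated), $I = \frm = (t^2, t^3)$, $J = (t^2)$, so $IJ = (t^4, t^5)$. Then $I/IJ$ has $k$-basis $\{t^2, t^3\}$ and is annihilated by $\frm$ (since $t^2\cdot t^2,\ t^2\cdot t^3,\ t^3\cdot t^3$ all lie in $IJ$); in particular $t^3$ kills $I/IJ$. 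But $R/J \cong k[\epsilon]/(\epsilon^2)$ with $\epsilon$ the class of $t^3$, so any $R$-map $R/J \to I/IJ$ sends $\epsilon$ to $0$ and cannot be injective. Hence $(IJ:x) \supsetneq J$ for every $x \in I$. Your determinant-trick paragraph does not help: it concerns $(IJ:I)$, not $(IJ:x)$ for a single $x$, and at best gives $r^3 \in J$ for $r \in (IJ:I)$, which does not force $r \in J$.

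The paper's argument is different and, crucially, symmetric in $I$ and $J$. A linear-algebra lemma (Lemma~\ref{lemma:linalg1}) applied to the multiplication map $I_\frm/\frm I_\frm \otimes J_\frm/\frm J_\frm \twoheadrightarrow (IJ)_\frm/\frm(IJ)_\frm$ (a space of dimension $\leq 3$) produces either an $x \in I_\frm$ with $(IJ)_\frm/xJ_\frm$ cyclic, or a $y \in J_\frm$ with $(IJ)_\frm/yI_\frm$ cyclic. In the first case one gets a \emph{surjection} $I_\frm/xR_\frm \twoheadrightarrow (IJ)_\frm/xJ_\frm$, and together with $\ell(R_\frm/xR_\frm) = \ell(J_\frm/xJ_\frm)$ this yields $\ell(R_\frm/I_\frm) \leq \ell(J_\frm/(IJ)_\frm)$. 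In the counterexample above only the second alternative holds (with $y = t^2$), so the disjunction is essential and your one-sided formulation cannot be rescued.

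For the extension to $R'$ you are also working much harder than necessary. The paper's argument is a single line (Remark~\ref{rmk:extension}): if $R'/R$ has exponent $n$ then $nJ \subseteq R$ for every $R'$-ideal $J$, so $J$ is a fractional $R$-ideal, hence $3$-generated over $R$ and a fortiori over $R'$. Thus $R'$ is itself a $1$-dimensional Noetherian domain with every ideal $3$-generated, and the first part applies directly to $R'$; no conductor bookkeeping is needed.
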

\begin{theorem}
 \label{thm:secondimpl}
 Let $R$ be a number ring with normalization $\tilde{R}$ such that for every maximal $R$-ideal $\frm$ the ideal norm of the number ring $R+\frm\tilde R$ is super-multiplicative. Then every $R$-ideal can be generated by $3$ elements.
\end{theorem}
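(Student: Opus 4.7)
The plan is to prove the contrapositive: supposing some $R$-ideal cannot be generated by $3$ elements, I will exhibit a maximal ideal $\frm$ of $R$ for which $R + \frm\tilde R$ fails super-multiplicativity. A standard local-global argument for ideals in a Noetherian $1$-dimensional domain provides a maximal ideal $\frm$ such that the localization $A := R_\frm$ admits an ideal requiring at least $4$ generators. Set $B := \tilde R_\frm$ and $C := A + \frm_A B$; the latter is the localization of $R + \frm\tilde R$ at $R\setminus\frm$. Since super-multiplicativity descends to localization (by restricting attention to ideals supported only at $\frm$), it will suffice to find $C$-ideals violating it.

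The ring $C$ is local with maximal ideal $\frn := \frm_A B$, which is also a $B$-ideal (the conductor of $C$ in $B$); its residue field is $k := A/\frm_A = C/\frn$. A direct computation (using that $B$ is a semilocal Dedekind ring) identifies $\frn/\frn^2$ with $B/\frn$ as $k$-vector spaces, and shows that their common dimension $d := \dim_k \frn/\frn^2$ equals the Hilbert--Samuel multiplicity $e(A)$. By the classical inequality $\mu_A(I) \leq e(A)$, valid for every ideal $I$ of a $1$-dimensional local Cohen--Macaulay ring, our hypothesis forces $d \geq 4$.

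Next I would pick $\alpha,\beta,\gamma \in \frn$ whose images in $\frn/\frn^2$ are $k$-linearly independent---possible since $d \geq 4 > 3$---and form the $C$-ideals $I := (\alpha,\beta)\,C$ and $J := (\alpha,\gamma)\,C$. Then $I + J = (\alpha,\beta,\gamma)\,C$ and $IJ = \alpha(I+J) + \beta\gamma\,C \subseteq \frn^2$. Combining the inclusion $IJ \subseteq I \cap J$ with the exact sequence
$$ 0 \longrightarrow C/(I \cap J) \longrightarrow C/I \oplus C/J \longrightarrow C/(I+J) \longrightarrow 0 $$
yields the identity
$$ \frac{N(IJ)}{N(I)\,N(J)} \;=\; \frac{[I \cap J : IJ]}{[C : I + J]}. $$
Since $\alpha,\beta,\gamma$ span a three-dimensional subspace of the $d$-dimensional $\frn/\frn^2$, the surjection $C/(I+J) \twoheadrightarrow C/\bigl((I+J) + \frn^2\bigr)$ gives $[C : I+J] \geq |k|^{d-2}$. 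Using that $\alpha$ is a non-zerodivisor---so multiplication by $\alpha$ is a $C$-module isomorphism $C \to \alpha C$---and that $IJ$ already contains $\alpha(I+J)$, one shows $[I \cap J : IJ] \leq |k|$. The ratio is therefore at most $|k|^{3-d} < 1$, forcing $N(IJ) < N(I)\,N(J)$ and contradicting super-multiplicativity of $C$.

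The hard part will be the estimate $[I \cap J : IJ] \leq |k|$: one must show that $I \cap J$ essentially coincides with $\alpha C$ up to a controllable correction in $\frn^2$, and that $IJ$ recovers this ideal up to at most a single $k$-dimension. The hypothesis $d \geq 4$ enters here essentially, since the analogous construction with $d = 3$ yields equality $N(IJ) = N(I)\,N(J)$, consistent with Theorem \ref{thm:firstimpl}.
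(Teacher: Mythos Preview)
Your setup matches the paper's: both localize at a bad prime, pass to $C=R_\frm+\frm_\frm\tilde R_\frm$, and use the identity $g(R_\frm)=\dim_k(\tilde R_\frm/\frm_\frm\tilde R_\frm)$ (which you cite as the classical bound $\mu\le e$; the paper proves it directly as Corollary~\ref{cor:weakmainthm}). Your ratio formula via the Mayer--Vietoris sequence is correct, and your ideals are, up to scaling, exactly the paper's: writing $\frn=\pi B$ (possible since $B$ is a semilocal PID) and choosing $\alpha=\pi$, $\beta=\pi x$, $\gamma=\pi y$, your $I,J$ are $\pi$ times the fractional $C$-ideals $C+xC$ and $C+yC$ used in Proposition~\ref{prop:3}.

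The gap is precisely the step you label ``the hard part.'' Your asserted bound $[I\cap J:IJ]\le|k|$ is not true for an arbitrary independent triple, and for $d>4$ it is false for \emph{every} choice. With $\alpha=\pi$ as above one finds $I\cap J=\pi C$ and $IJ=\pi^2(C+xC+yC+xyC)$, whence
\[
[I\cap J:IJ]=|k|^{\,1+d-m},\qquad [C:I+J]=|k|^{\,d-2},\qquad m:=\dim_k(k+k\bar x+k\bar y+k\bar x\bar y),
\]
so $N(IJ)/(N(I)N(J))=|k|^{\,3-m}$. Thus the construction violates super-multiplicativity if and only if one can find $\bar x,\bar y$ in the $k$-algebra $A=B/\frn$ with $1,\bar x,\bar y,\bar x\bar y$ linearly independent. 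This is not a matter of estimating an index; it is a structural fact about $A$ that can genuinely fail for commutative $k$-algebras of dimension $\ge 4$ (e.g.\ $A=k\oplus V$ with $V\cdot V=0$). The paper handles this with a classification result (Theorem~\ref{thm:lenstra}): such $\bar x,\bar y$ exist unless $A$ belongs to one of two exceptional families, and Proposition~\ref{prop:3} then uses that $A$ is commutative and that $B$ is Dedekind (so $\dim_k\tilde\frm/\tilde\frm^2=1$ for maximal ideals $\tilde\frm$ of $B$) to exclude those families. Your proposal does not supply this step, and nothing in the ``$\alpha$ is a non-zerodivisor'' heuristic can replace it.
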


\section{Preliminaries}
\label{sec:prelim}
A field $K$ is called \emph{number field} if it is a finite extension of $\Q$. In this article all rings are unitary and commutative. We will say that $R$ is a \emph{number ring} if it is a subring of a number field. A number ring for which the additive group is finitely generated is called an \emph{order} in its field of fractions. In every number ring there are no non-zero additive torsion element. Every order is a free abelian group of rank $[\Frac(R):\Q]$, where $\Frac(R)$ is the fraction field of $R$.

\begin{proposition}
Let $R$ be a number ring. Then
\begin{enumerate}
\item every non-zero $R$-ideal has finite index;
\item $R$ is Noetherian;
\item if $R$ is not a field then it has Krull dimension $1$, that is every non-zero prime ideal is maximal;
\item if $S$ is a number ring containing $R$ and $\p$ a maximal ideal of $R$, then there are only finitely many prime $S$-ideal $\q$ above $\p$, that is $\q\supseteq \p S$;
\item $R$ has finite index in its normalization $\tilde R$.
\end{enumerate}
\end{proposition}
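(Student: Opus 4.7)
The plan is to prove (5) first, derive (1) from it, and then deduce (2)--(4) as easy consequences. The crux is (5). Replacing $K$ by $\Frac(R)$ if necessary I may assume $K = \Frac(R)$. Since $\cO_K$ is integral over $\Z \subseteq R$ we have $\cO_K \subseteq \tilde R$; conversely, $R\cdot\cO_K$ is a ring between $\cO_K$ and $K=\Frac(\cO_K)$, hence by the standard classification of overrings of a Dedekind domain it is a localization of $\cO_K$, in particular integrally closed, so $\tilde R\subseteq R\cdot\cO_K$. Therefore $\tilde R = R\cdot\cO_K$ is itself a Dedekind domain (or a field) whose residue fields are quotients of $\cO_K$, hence finite. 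Next I would produce a non-zero $m\in\Z$ with $m\cO_K\subseteq R$: for each element $\omega$ of a fixed $\Z$-basis of $\cO_K$, write $\omega = a/b$ with $a,b\in R$, and use the minimal polynomial of $b$ over $\Q$, cleared of denominators, to extract a non-zero integer constant term $c_0$ lying in $bR$, whence $c_0\omega\in R$; take $m$ to be the product of the finitely many such $c_0$'s. Then $m\tilde R = R \cdot m\cO_K \subseteq R$, so the conductor $\mathfrak{f}:=\{r\in R : r\tilde R\subseteq R\}$ is a non-zero ideal of both $R$ and $\tilde R$. Factoring $\mathfrak{f}$ in the Dedekind domain $\tilde R$ and applying the Chinese remainder theorem shows $\tilde R/\mathfrak{f}$ is finite; the injection $R/\mathfrak{f}\hookrightarrow\tilde R/\mathfrak{f}$ then forces $R/\mathfrak{f}$ to be finite as well. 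Finally, $\tilde R/R$ is generated as an $R$-module by the $\omega_i$ and annihilated by $\mathfrak{f}$, hence it is a finitely generated module over the finite ring $R/\mathfrak{f}$, thus finite.

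For (1), pick $0\neq x\in I$ and apply the same minimal-polynomial trick to $x$: the non-zero constant term of its cleared minimal polynomial lies in $xR\subseteq I$, yielding a non-zero integer $n\in I$, so it suffices to show $R/nR$ is finite. The natural map $R/nR\to\tilde R/n\tilde R$ lands in a finite ring (by the Dedekind structure of $\tilde R$), and its kernel $(R\cap n\tilde R)/nR$ is isomorphic, via multiplication by $n$, to $R_n/R$ where $R_n := \tilde R\cap\tfrac{1}{n}R$; since $R\subseteq R_n\subseteq\tilde R$ and $\tilde R/R$ is finite by (5), the kernel is finite, and therefore $R/nR$, hence $R/I$, is finite.

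The remaining items then follow with little effort. For (2), any ascending chain of ideals that is eventually non-zero yields, via (1), a descending sequence of positive integers $[R:I_n]$ which must stabilize. For (3), any non-zero prime $\p$ makes $R/\p$ a finite integral domain, hence a field, so $\p$ is maximal. For (4), the primes of $S$ above $\p$ correspond to those of $S/\p S$; since $\p S\neq 0$, applying (1) to the number ring $S$ shows $S/\p S$ is finite, so it has only finitely many primes. I expect the only real friction to arise in (5), especially in recognizing $\tilde R$ as a localization of $\cO_K$ and extracting a non-zero conductor; once those are secured, the remaining arguments are routine bookkeeping.
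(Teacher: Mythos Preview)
The paper does not actually prove this proposition; immediately after the statement it writes ``For a proof and more about number rings see \cite{psh08}.'' So there is nothing to compare against in the paper itself. Your argument is essentially the one given in the cited reference: the key structural input is the identification $\tilde R = R\cdot\cO_K$ as an overring---hence a localization---of the Dedekind domain $\cO_K$, after which a non-zero conductor is produced and the finiteness claims follow from the finiteness of residue fields of $\cO_K$.

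Your proof is correct, with two cosmetic remarks. In part~(1), the isomorphism $(R\cap n\tilde R)/nR \simeq R_n/R$ with $R_n=\tilde R\cap\tfrac{1}{n}R$ is given by \emph{division} by $n$, not multiplication (equivalently, multiplication by $n$ furnishes the inverse map $R_n/R\to (R\cap n\tilde R)/nR$); the conclusion is unaffected. In part~(4) you tacitly use $\p S\neq 0$, which fails when $R$ is a field and $\p=0$; but in that case $S$ is a domain that is finite-dimensional over the field $R$, hence Artinian, hence itself a field with a single prime, and the statement holds trivially. With these adjustments the argument is complete.
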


For a proof and more about number rings see \cite{psh08}.

Recall that for a commutative domain $R$ with field of fractions $K$, a \emph{fractional $R$-ideal} $I$ is a  non-zero $R$-submodule of $K$ such that $xI\subseteq R$ for some non-zero $x\in K$. Multiplying by a suitable element of $R$, we can assume that the element $x$ in the definition is in $R$. It is useful to extend the definition of the index to arbitrary fractional ideals $I$ and $J$ taking:
\[[I:J]=\dfrac{[I:I\cap J]}{[J:I\cap J]}.\]
It is an easy consequence that we have $[I:J]=[I:H]/[J:H]$ for every fractional ideal $H$.
In particular, if $[R:I]$ is finite we call it the \emph{norm of the ideal $I$}, and we denote it $N(I)$. In general the ideal norm is not multiplicative.
\begin{lemma} 
\label{lemma:principalideal}
Let $R$ be a number ring and let $I$ be a non-zero $R$-ideal. For every non-zero $x\in K$ we have 
\[ N(xR)N(I)=N(xI).\]
\end{lemma}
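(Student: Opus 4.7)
The plan is to reduce the claim to two elementary facts about the generalized index $[\cdot:\cdot]$ introduced just above the statement. First, for any nonzero $x\in K$, multiplication by $x$ is a bijective homomorphism of abelian groups $R\to xR$ that sends the subgroup $I$ to $xI$. Hence it induces a group isomorphism $R/I\cong xR/xI$, and in particular
\[
[R:I]=[xR:xI].
\]
Since $I$ is a nonzero $R$-ideal in a number ring, the left-hand side is finite, so both sides equal $N(I)$ in the usual integer sense.

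Second, I want to use the multiplicativity of the generalized index along a ``chain'' $R,\, xR,\, xI$, even though $xR$ and $xI$ need not sit inside $R$. This is where the identity $[A:B]=[A:H]/[B:H]$ for a common reference fractional ideal $H$ (recalled right before the lemma) does the work. Choosing any fractional ideal $H$ contained in all three of $R$, $xR$, $xI$ (for instance $xI$ scaled by an appropriate element of $R$ to sit inside all of them), one gets
\[
[R:xR]\cdot [xR:xI]=\frac{[R:H]}{[xR:H]}\cdot\frac{[xR:H]}{[xI:H]}=\frac{[R:H]}{[xI:H]}=[R:xI].
\]

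Combining the two ingredients yields
\[
N(xR)\,N(I)=[R:xR]\cdot[R:I]=[R:xR]\cdot[xR:xI]=[R:xI]=N(xI),
\]
which is the claim. I do not expect a genuine obstacle here; the only point that requires a little care is to remember that $xR$ and $xI$ are only fractional ideals, so one must work with the generalized index rather than the naive index of subgroups. Once the chain-multiplicativity of $[\cdot:\cdot]$ is granted in the generalized sense, the lemma reduces to the translation-invariance $[R:I]=[xR:xI]$, which is immediate from the group isomorphism induced by multiplication by $x$.
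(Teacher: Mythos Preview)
Your proof is correct and follows essentially the same approach as the paper: both use that multiplication by $x$ induces an isomorphism $R/I\cong xR/xI$ together with the chain multiplicativity of the generalized index. The only cosmetic difference is that you run the chain through $xR$ (writing $[R:xR][xR:xI]=[R:xI]$), whereas the paper runs it through $I$ (deducing $[R:xR]=[I:xI]$ first and then $[R:xR][R:I]=[R:xI]$); these are two equivalent ways to combine the same two ingredients.
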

\begin{proof}
As $R$ is a domain, the multiplication by $x$ induces an isomorphism $R/I\simeq xR/xI$ of (additive) groups. Hence we have $[R:xR]=[I:xI]$ and therefore $[R:xR][R:I]=[R:xI]$.
\end{proof}

\begin{proposition}
\label{prop:lengthproduct}
 Let $S\subseteq R$ be an extension of commutative rings. Let $I$ be an $R$-ideal such that $[R:I]$ is finite. Then
 \[[R:I] = \prod_{\frm} [R_\frm: I_\frm] = \prod_\frm \# ( S / \frm )^{l_{S_\frm}( R_\frm / I_\frm )},\]
 where the products are taken over the maximal ideals of $S$ and $l_{S_\frm}$ denotes the length as an $S_\frm$-module.
 Moreover, we have that 
 \[l_S(R/I)=\sum_\frm l_{S_\frm}(R_\frm/I_\frm).\]
\end{proposition}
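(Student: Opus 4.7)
The plan is to view $R/I$ as a finite-length $S$-module and decompose it as a direct sum of its localizations at the maximal ideals of $S$ in its support; then the two product identities will follow from counting cardinalities and lengths respectively, and the final displayed equality from counting composition factors.

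First I would observe that, since $[R:I]$ is finite, the quotient $R/I$ is a finite abelian group, hence both Noetherian and Artinian as an $S$-module, so $l_S(R/I)<\infty$. Its annihilator $J=\mathrm{Ann}_S(R/I)$ embeds $S/J$ into the finite ring $\mathrm{End}_\Z(R/I)$, so $J$ has finite index in $S$ and $S/J$ has only finitely many maximal ideals, which are precisely the maximal ideals $\frm_1,\dots,\frm_r$ of $S$ lying in $\mathrm{Supp}_S(R/I)$. The standard decomposition of a finite-length module over a commutative ring then gives
\[ R/I \;\cong\; \bigoplus_{i=1}^{r} (R/I)_{\frm_i} \;=\; \bigoplus_{i=1}^{r} R_{\frm_i}/I_{\frm_i}, \]
while $(R/I)_\frm=0$ for every other maximal ideal $\frm$. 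Taking cardinalities yields $[R:I]=\prod_\frm[R_\frm:I_\frm]$, and taking lengths yields $l_S(R/I)=\sum_\frm l_S(R_\frm/I_\frm)$; the latter agrees with $\sum_\frm l_{S_\frm}(R_\frm/I_\frm)$ because each $R_\frm/I_\frm$ is $\frm$-local, so the elements of $S\setminus\frm$ act invertibly on it and its $S$-submodules coincide with its $S_\frm$-submodules.

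Finally, for each $\frm$ I would compute $\#R_\frm/I_\frm$ via a composition series of $R_\frm/I_\frm$ as an $S_\frm$-module of length $n=l_{S_\frm}(R_\frm/I_\frm)$. Each simple factor is isomorphic to $S_\frm/\frm S_\frm$, and since $S/\frm$ is already a field the natural map $S/\frm\to S_\frm/\frm S_\frm$ is an isomorphism; thus every simple factor has exactly $\#(S/\frm)$ elements, and multiplying through the series yields $[R_\frm:I_\frm]=\#(S/\frm)^{l_{S_\frm}(R_\frm/I_\frm)}$. I do not expect a real obstacle; the only subtle point is that $S$ is an arbitrary commutative subring, with no Noetherian or domain hypothesis imposed, but this is handled at the outset by observing that $R/I$ is a finite, hence Artinian, $S$-module whose annihilator has finite index, which is exactly what makes the local decomposition and the length computations available.
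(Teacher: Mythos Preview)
Your proof is correct and follows essentially the same approach as the paper's: both arguments rest on viewing $R/I$ as a finite-length $S$-module, decomposing it into its localizations at the maximal ideals in its support, and counting composition factors, each of which is isomorphic to some $S/\frm$. The only cosmetic difference is that the paper takes a single global composition series and then localizes to filter the factors (citing Eisenbud for the direct-sum decomposition), whereas you first establish the direct-sum decomposition yourself and then take separate local composition series; the underlying mathematics is identical.
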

\begin{proof}
 As $R/I$ has finite length as an $S$-module, there exists a composition series
 \[R/I=M_0\supset M_1\supset \cdots \supset M_l = 0,\]
 where the $M_i$ are $S$-modules such that $M_i/M_{i+1}\simeq S/\frm_i$ for some maximal $S$-ideal $\frm_i$. Now fix a maximal $S$-ideal $\frm$. Observe that $\#\set{i\ :\ \frm_i=\frm} = l_{S_\frm}(R_\frm/I_\frm)$ because all the factors isomorphic to $S/\frm_i$ disappear if we localize at $\frm\neq \frm_i$. This implies that $l_S(R/I)=\sum_\frm l_{S_\frm}(R_\frm/I_\frm)$ and that $[ R_\frm : I_\frm ] =  \# ( S / \frm  )^{l_{S_\frm}( R_\frm/I_\frm)}$. By \cite[2.13, p.72]{eis95} we have $[R:I] = \prod_{\frm} [R_\frm:I_\frm]$.
 Observe that there is no harm in taking the product over all the maximal ideal of $S$ because the module $R/I$ vanishes if we localize at a maximal ideal that does not appear in its composition series.
\end{proof}

\begin{proposition}
\label{prop:normmult}
Let $R$ be a number ring, $I$ an invertible $R$-ideal. Then for every $R$-ideal $J$ we have
\[N(I)N(J)=N(IJ).\]
\end{proposition}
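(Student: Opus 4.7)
The plan is to reduce the multiplicativity statement to the local situation and to the principal-ideal case (Lemma \ref{lemma:principalideal}), using Proposition \ref{prop:lengthproduct} as the gluing device.

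First I would observe that since $I$ is an invertible $R$-ideal, its localization $I_\frm$ at every maximal ideal $\frm$ of $R$ is a principal $R_\frm$-ideal; write $I_\frm = x_\frm R_\frm$ for some $x_\frm$ in the field of fractions $K$. This is the only place where invertibility of $I$ is used, and it is what allows the local reduction to go through.

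Next, for a fixed maximal ideal $\frm$, I would argue that multiplication by $x_\frm$ induces an isomorphism of $R_\frm$-modules $R_\frm/J_\frm \simeq x_\frm R_\frm / x_\frm J_\frm = I_\frm / (IJ)_\frm$, so that the short exact sequence
\[
0 \to I_\frm/(IJ)_\frm \to R_\frm/(IJ)_\frm \to R_\frm/I_\frm \to 0
\]
gives the equality of lengths $l_{R_\frm}\bigl(R_\frm/(IJ)_\frm\bigr) = l_{R_\frm}(R_\frm/I_\frm) + l_{R_\frm}(R_\frm/J_\frm)$. Equivalently, $[R_\frm:(IJ)_\frm] = [R_\frm:I_\frm]\cdot [R_\frm:J_\frm]$; this is the local analogue of Lemma \ref{lemma:principalideal}, whose proof transfers verbatim since $R_\frm$ is still a domain.

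Finally, I would invoke Proposition \ref{prop:lengthproduct} with $S=R$ to express each of $N(I)$, $N(J)$, $N(IJ)$ as a product over maximal ideals of $R$ of the corresponding local indices, and then multiply the local equalities over all $\frm$ to deduce $N(I)N(J) = N(IJ)$. The main subtlety to handle carefully is the passage from local to global: one must check that the set of maximal ideals contributing nontrivially is the same on both sides, but this is taken care of by Proposition \ref{prop:lengthproduct} since factors at $\frm$ with $I_\frm = J_\frm = R_\frm$ are all equal to $1$, and $J$ (hence $IJ$) has finite index by the first item of the preliminaries proposition, guaranteeing that the products are finite.
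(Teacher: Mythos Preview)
Your proof is correct and follows essentially the same route as the paper: reduce to the local case using that invertible ideals are locally principal, apply the principal-ideal multiplicativity (Lemma~\ref{lemma:principalideal}) at each $\frm$, and then glue via Proposition~\ref{prop:lengthproduct}. The only cosmetic difference is that you spell out the local step via the short exact sequence and length additivity, whereas the paper simply cites Lemma~\ref{lemma:principalideal} applied to $R_\frm$.
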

\begin{proof}
Recall that if an ideal $I$ is invertible then the localization $I_\frm$ at every maximal  $R$-ideal $\frm$ is a principal $R_\frm$-ideal (see \cite[11.3, p.80]{mats89}). So by Lemma \ref{lemma:principalideal} we have that $[R_\frm:J_\frm][R_\frm:I_\frm]=[R_\frm:(IJ)_\frm]$ for every $\frm$. Hence by Proposition \ref{prop:lengthproduct}
\[N(IJ)=\prod_\frm [R_\frm:(IJ)_\frm] = \prod_\frm [R_\frm:I_\frm]\prod_\frm [R_\frm:J_\frm]=N(I)N(J).\]
\end{proof}

\begin{proposition}
\label{prop:lengthlocal}
 Let $S\subseteq R$ be an extension of commutative domains, $\frm$ a maximal $S$-ideal and $J$ a proper ideal of the localization $R_\frm$ such that $R_\frm/J$ has finite length as an $S_\frm$-module. Then
 \[ \frac{R}{J\cap R} \simeq \frac{R_\frm}{J} \]
 as $S$-modules. Moreover,
 \[l_S\left( \frac{R}{J\cap R} \right) = l_{S_\frm}\left( \frac{R_\frm}{J} \right). \]
\end{proposition}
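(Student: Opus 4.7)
The plan is to construct the isomorphism $R/(J\cap R) \to R_\frm/J$ as the map induced by the composition $R \to R_\frm \to R_\frm/J$, and to deduce the length statement formally from this $S$-module isomorphism.

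First I would note that the kernel of the composition $\varphi\colon R \to R_\frm \to R_\frm/J$ is by definition the contraction $J\cap R$, so the induced map $\bar\varphi\colon R/(J\cap R) \to R_\frm/J$ is automatically an injective $S$-module homomorphism. The substance of the proposition lies in verifying surjectivity: given $r/s \in R_\frm$ with $r\in R$ and $s\in S\setminus\frm$, one has to produce $r'\in R$ with $r'/1 \equiv r/s \pmod J$.

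The key input is the finite length hypothesis. Since $R_\frm/J$ has finite length as an $S_\frm$-module and the only simple $S_\frm$-module is $S_\frm/\frm S_\frm$, the module $R_\frm/J$ is annihilated by some power of $\frm S_\frm$, i.e.\ $\frm^n R_\frm \subseteq J$ for some $n\geq 1$. Since $\frm^n$ is $\frm$-primary and $\frm$ is maximal, the ring $S/\frm^n$ is local with maximal ideal $\frm/\frm^n$, so $s$ is a unit in $S/\frm^n$; this yields $t\in S$ with $st \equiv 1 \pmod{\frm^n}$. Taking $r' = tr \in R$, a direct check gives $sr' - r = r(st-1) \in \frm^n R \subseteq \frm^n R_\frm \subseteq J$, and since $s$ is a unit in $R_\frm$ this is equivalent to $r'/1 \equiv r/s \pmod J$, establishing surjectivity.

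For the length equality, the $S$-module isomorphism just proved yields $l_S(R/(J\cap R)) = l_S(R_\frm/J)$. To replace the right-hand side by $l_{S_\frm}(R_\frm/J)$, I would remark that any $S_\frm$-composition series of $R_\frm/J$ is simultaneously an $S$-composition series, since its simple factors $S_\frm/\frm S_\frm \cong S/\frm$ are already simple as $S$-modules. The only non-routine ingredient is the surjectivity of $\bar\varphi$, and within that the observation that elements of $S\setminus\frm$ become units modulo $\frm^n$, which is precisely what lets one clear denominators inside $R$ modulo $J$.
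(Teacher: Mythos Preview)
Your proof is correct and rests on the same key observation as the paper's---that finite $S_\frm$-length forces $\frm^n R_\frm \subseteq J$ for some $n$---but the two arguments package the conclusion differently. For surjectivity, the paper argues abstractly: since $R/(J\cap R)$ embeds into an $\frm^n$-torsion module it is itself $\frm^n$-torsion, hence isomorphic to its own localization at $\frm$, and then one invokes $(J\cap R)_\frm = J$ to identify that localization with $R_\frm/J$. You instead clear denominators by hand, using that $s\in S\setminus\frm$ becomes a unit in $S/\frm^n$; this is more concrete and avoids citing the ``module equals its localization'' principle. For the length equality, the paper appeals to the decomposition $l_S = \sum_\frn l_{S_\frn}$ from Proposition~\ref{prop:lengthproduct} and checks that the summands at $\frn\neq\frm$ vanish, whereas you simply note that an $S_\frm$-composition series is already an $S$-composition series because $S_\frm/\frm S_\frm \cong S/\frm$ is $S$-simple. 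Your route is slightly more self-contained; the paper's leans on localization machinery already in place.
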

\begin{proof}
 As $R$ is a domain the localization morphism $R \to R_\frm$ composed with the projection $R_\frm \to R_\frm/J$ induces an injective morphism $R/(J\cap R)\to R_\frm/J$. As $l_{S_\frm}( R_\frm / J)$ is finite, $R/(J\cap R)$ is annihilated by some power of $\frm$ and by \cite[2.13, p.72]{eis95} we have that it is isomorphic to its localization at $\frm$. As $(J\cap R)_\frm = J$ we have that $R/(J\cap R)\simeq R_\frm/J$ as $S$-modules. In particular they have the same length as $S$-modules. By Proposition \ref{prop:lengthproduct} we have that $l_S(R_\frm/J) = \sum_\frn l_{S_\frn}((R_\frm/J)_\frn)$, where the sum is taken over the maximal $S$-ideals.
 So to conclude, we need to prove that if $\frn\neq \frm $, then $l_{S_\frn}((R_\frm/J)_\frn)=0$, which is a direct consequence of the fact that $(R_\frm/J)_\frn=0$ when $\frn \neq \frm$.
\end{proof}

\begin{definition}
\label{def:sm}
Let $R$ be a commutative ring. We will say that the ideal norm of $R$ is \emph{super-multiplicative}  if for every pair of $R$-ideals $I$ and $J$ such that $[R:IJ]$ is finite we have
\[N(IJ)\geq N(I)N(J).\]
For brevity we will say that $R$ is \emph{super-multiplicative}.
\end{definition}
\begin{proposition}
 Let $R$ be a number ring. Let $I$ be any non-zero $R$-ideal and $\p$ a maximal $R$-ideal. Then $N(\p I)\geq N(I)N(\p)$.
\end{proposition}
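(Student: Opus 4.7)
The plan is to localize at $\p$ and reduce to a single-local-factor inequality that follows from Nakayama's lemma. By Proposition \ref{prop:lengthproduct} applied with $S=R$, the norms factor as
\[
N(\p I)=\prod_\frm [R_\frm:(\p I)_\frm],\qquad N(I)=\prod_\frm [R_\frm:I_\frm],\qquad N(\p)=[R_\p:\p_\p],
\]
the last equality because $\p_\frm=R_\frm$ for every maximal ideal $\frm\neq\p$ (since $R$ has Krull dimension $1$, so $\p\not\subseteq\frm$). The same observation shows $(\p I)_\frm=I_\frm$ for $\frm\neq\p$, so the products over $\frm\neq\p$ in $N(\p I)$ and $N(I)$ match term by term.

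After canceling those common factors, the desired inequality $N(\p I)\geq N(I)N(\p)$ becomes the purely local statement
\[
[R_\p:\p_\p I_\p]\;\geq\;[R_\p:I_\p]\cdot [R_\p:\p_\p].
\]
Writing $[R_\p:\p_\p I_\p]=[R_\p:I_\p]\cdot[I_\p:\p_\p I_\p]$, this reduces to showing
\[
[I_\p:\p_\p I_\p]\;\geq\;[R_\p:\p_\p].
\]

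The last step is where Nakayama enters. Set $k=R_\p/\p_\p$, which is a finite field of cardinality $N(\p)$. Both sides above are powers of $\#k$: the right side is $\#k$, and the left side is $\#k$ raised to $\dim_k(I_\p/\p_\p I_\p)$. Since $R$ is Noetherian (from the preliminary proposition), the ideal $I_\p$ is a finitely generated module over the local ring $R_\p$, and it is nonzero because $I\neq 0$ and $R$ is a domain. Nakayama's lemma therefore forces $I_\p/\p_\p I_\p\neq 0$, i.e.\ $\dim_k(I_\p/\p_\p I_\p)\geq 1$, yielding the required inequality. The only delicate point is being careful with the localizations and with the fact that in dimension $1$ the prime $\p$ is the unique maximal ideal contributing to both sides; once that bookkeeping is in place, Nakayama closes the argument in one line.
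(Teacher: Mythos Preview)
Your argument is correct. The localization bookkeeping is sound: in Krull dimension $1$ a maximal ideal $\frm\neq\p$ satisfies $\p_\frm=R_\frm$, so all factors away from $\p$ cancel, and the local Nakayama argument finishes it.

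That said, the paper's proof is more direct and avoids localization entirely. It simply reads off from the short exact sequence
\[
0\to I/\p I \to R/\p I \to R/I \to 0
\]
that $\#(I/\p I)=N(\p I)/N(I)$, observes that $I/\p I$ is an $(R/\p)$-vector space of some finite dimension $d$, and concludes $N(\p I)=N(I)N(\p)^d\geq N(I)N(\p)$. Your localization step is really just a roundabout way of isolating this same vector space: since $I/\p I$ is already annihilated by $\p$, it is supported only at $\p$, so $I/\p I\simeq I_\p/\p_\p I_\p$ automatically, and your local inequality $[I_\p:\p_\p I_\p]\geq[R_\p:\p_\p]$ is exactly the statement $d\geq 1$. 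The one thing your version does that the paper's glosses over is the explicit justification of $d\geq 1$ via Nakayama; the paper takes this for granted. So the core idea is identical, but you have added machinery (Proposition~\ref{prop:lengthproduct}, localization at all primes, cancellation) that the global argument does not need.
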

\begin{proof}
By the isomorphism of abelian groups
\[\dfrac{R/\p I}{I/\p I}\simeq R/I\]
we get that
\[\#(I/\p I)=N(\p I)/N(I).\]
Since $I/\p I$ is a $(R/\p)$-vector space of finite dimension, say $d$, we have $\#(I/\p I)=N(\p)^d$.
Therefore $N(\p I)=N(I)N(\p)^d\geq N(I)N(\p)$.
\end{proof}
\begin{corollary}
\label{cor:submdedekind}
 Let $R$ be a number ring. Assume that for every maximal $R$-ideal $\p$ we have the inequality $N(\p^2)\leq N(\p)^2$. Then $R$ is a Dedekind domain.
\end{corollary}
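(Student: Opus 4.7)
The plan is to convert the hypothesis, via the preceding proposition, into a bound on $\dim_{R/\p}(\p/\p^2)$, and then invoke standard local algebra to show each localization $R_\p$ is a discrete valuation ring.

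First I would dispose of the degenerate case where $R$ is a field (which is trivially a Dedekind domain). Otherwise $R$ is a Noetherian domain of Krull dimension $1$, so every maximal ideal $\p$ is non-zero. Applying the preceding proposition with $I=\p$ gives
\[N(\p^2)=N(\p)\cdot N(\p)^d, \qquad \text{where } d=\dim_{R/\p}(\p/\p^2),\]
so the hypothesis $N(\p^2)\leq N(\p)^2$ forces $d\leq 1$.

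Next, I would rule out $d=0$: if $\p=\p^2$, then localizing at $\p$ and applying Nakayama's lemma to the finitely generated $R_\p$-module $\p R_\p$ would give $\p R_\p=0$, hence $\p=0$ in the domain $R$, contrary to $R$ not being a field. Hence $d=1$, so $\p R_\p/\p^2 R_\p$ is a one-dimensional vector space over the residue field $R_\p/\p R_\p$, and Nakayama implies that $\p R_\p$ is principal. Therefore $R_\p$ is a Noetherian local domain of dimension $1$ with principal maximal ideal, i.e., a DVR.

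Finally, a Noetherian $1$-dimensional domain whose localization at every maximal ideal is a DVR is a Dedekind domain, and we are done. There is no real obstacle: the only substantive step is reading off $\dim_{R/\p}(\p/\p^2) \leq 1$ from the preceding proposition, after which everything is standard.
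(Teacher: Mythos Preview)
Your proof is correct and follows essentially the same route as the paper: extract from the proof of the preceding proposition the equality $N(\p^2)=N(\p)\,N(\p)^d$ with $d=\dim_{R/\p}(\p/\p^2)$, deduce $d\le 1$, rule out $d=0$, and conclude that $R$ is Dedekind. The only differences are cosmetic: you spell out the Nakayama argument for $d\neq 0$ and the passage through DVRs, whereas the paper simply asserts $\p^2\subsetneq\p$ and that $\dim_{R/\p}(\p/\p^2)=1$ for all $\p$ characterizes Dedekind domains.
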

\begin{proof}
 As in the proof of the previous Proposition we obtain $N(\p^2)=N(\p)N(\p)^d,$ where $d=\dim_{R/\p}(\p/\p^2)$. Using the hypothesis we get $N(\p)^d\leq N(\p)$ which implies that $d\leq 1$. Observe that it cannot be zero, as $\p^2\subsetneq \p$. Hence we have that $\dim_{R/\p}(\p/\p^2) = 1$
 for every maximal ideal $\p$, which is equivalent to say that $R$ is a Dedekind domain.
\end{proof}

Being super-multiplicative is a local property for commutative domains. More precisely:
\begin{lemma}
\label{lemma:smlocal}
Let $S\subseteq R$ be an extension of commutative domains. Then $R$ is super-multiplicative if and only if for every maximal $S$-ideal $\frm$ we have that $R_\frm$ is super-multiplicative.
\end{lemma}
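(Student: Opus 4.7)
The plan is to reduce both directions to the product formula for the ideal norm from Proposition \ref{prop:lengthproduct}: for every $R$-ideal $A$ with $[R:A]$ finite,
\[
N(A) = \prod_{\frm} [R_\frm : A_\frm],
\]
the product ranging over the maximal ideals of $S$.

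The $(\Leftarrow)$ direction then falls out quickly. Assume each $R_\frm$ is super-multiplicative and take $R$-ideals $I, J$ with $[R:IJ]$ finite. The product formula forces every local factor $[R_\frm : I_\frm J_\frm]$ to be finite, so applying super-multiplicativity of $R_\frm$ at each maximal $S$-ideal $\frm$ gives $[R_\frm : I_\frm J_\frm] \geq [R_\frm : I_\frm] \cdot [R_\frm : J_\frm]$. Multiplying over all $\frm$ and re-assembling via the product formula yields $N(IJ) \geq N(I) N(J)$.

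For the $(\Rightarrow)$ direction, I would fix a maximal $S$-ideal $\frm$ and $R_\frm$-ideals $\mathfrak{a}, \mathfrak{b}$ with $[R_\frm : \mathfrak{a}\mathfrak{b}]$ finite, and contract to $R$ by setting $I = \mathfrak{a} \cap R$ and $J = \mathfrak{b} \cap R$, so that $I_\frm = \mathfrak{a}$ and $J_\frm = \mathfrak{b}$. Proposition \ref{prop:lengthlocal} gives $R/I \simeq R_\frm/\mathfrak{a}$ and $R/J \simeq R_\frm/\mathfrak{b}$ as $S$-modules, so in particular $N(I) = N(\mathfrak{a})$ and $N(J) = N(\mathfrak{b})$ are both finite. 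The key step is to establish $N(IJ) = N(\mathfrak{a}\mathfrak{b})$: since $R/I$ and $R/J$ are annihilated by some power of $\frm$, so is $R/IJ$, and hence $(IJ)_{\frm'} = R_{\frm'}$ for every maximal $S$-ideal $\frm' \neq \frm$ (this also ensures $[R:IJ]$ is finite). The product formula therefore collapses to the single non-trivial factor $[R_\frm : I_\frm J_\frm] = [R_\frm : \mathfrak{a}\mathfrak{b}] = N(\mathfrak{a}\mathfrak{b})$. Invoking super-multiplicativity of $R$ on $I, J$ yields $N(\mathfrak{a}\mathfrak{b}) = N(IJ) \geq N(I) N(J) = N(\mathfrak{a}) N(\mathfrak{b})$.

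The main obstacle is in the $(\Rightarrow)$ direction: showing that contraction from $R_\frm$ to $R$ preserves the norm of the product ideal, with no stray contributions from maximal $S$-ideals other than $\frm$. This reduces to verifying that $R/IJ$ is $\frm$-primary, which I would extract from the $\frm$-primary nature of $R/I$ and $R/J$ via Proposition \ref{prop:lengthlocal}, together with the product decomposition in Proposition \ref{prop:lengthproduct}.
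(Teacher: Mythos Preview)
Your proposal is correct and follows essentially the same approach as the paper. Both directions hinge on the product formula from Proposition~\ref{prop:lengthproduct} and on Proposition~\ref{prop:lengthlocal} to pass between $R$ and $R_\frm$; the only cosmetic difference is that in the $(\Rightarrow)$ direction the paper proves the ideal equality $(\mathfrak a\cap R)(\mathfrak b\cap R)=\mathfrak a\mathfrak b\cap R$ by checking localizations, whereas you bypass this and compute $N(IJ)$ directly from the fact that $R/IJ$ is $\frm$-primary---the same local information, packaged slightly differently.
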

\begin{proof}
Assume that $R$ is super-multiplicative and let $I$ and $J$ be $R_\frm$-ideals with $IJ$ of finite index in $R_\frm$. Then by Proposition \ref{prop:lengthlocal} we have $[R_\frm:IJ]=[R:IJ\cap R]$, $[R_\frm:I]=[R:I\cap R]$ and $[R_\frm:J]=[R:J\cap R]$.
By Proposition \ref{prop:lengthproduct}, we obtain $l_{S_\frn}((R/IJ\cap R)_\frn)=0$, $l_{S_\frn}((R/I\cap R)_\frn)=0$ and $l_{S_\frn}((R/J\cap R)_\frn)=0$, for every maximal $S$-ideal $\frn$ distinct from $\frm$.
We have that $(I\cap R)_\frm(J\cap R)_\frm = (IJ\cap R)_\frm = IJ$ and $(I\cap R)_\frn(J\cap R)_\frn = (IJ\cap R)_\frn = R_\frn$ for $\frn $ a maximal $S$-ideal of $S$ distinct from $ \frm$, so $(I\cap R)(J\cap R) = (IJ\cap R)$. Hence we get that $[R_\frm:IJ]\geq [R_\frm:I][R_\frm:J]$, that is $R_\frm$ is super-multiplicative.\\
In the other direction, if we have that $R_\frm$ is super-multiplicative for every $\frm$, taking the product of the norms of the localizations leads to the required global inequality by Proposition \ref{prop:lengthproduct}.
\end{proof}
The next result is well known. We include a proof for sake of completeness.
\begin{proposition}
\label{prop:semilocal}
Let $R$ be a \emph{semilocal} commutative domain, i.e.~a domain with a finite number of maximal ideals. Then, a fractional ideal of $R$ is invertible if and only if it is principal and non-zero. In particular, a semilocal Dedekind domain, like the normalization of any local number ring, is a principal ideal domain.
\end{proposition}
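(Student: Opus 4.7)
The plan is to dispatch the easy direction by observing that $x^{-1}R$ is the inverse of $xR$, and then for the converse to reduce invertibility to a local statement and glue local generators into a single global one using the Chinese Remainder Theorem. The semilocal hypothesis is exactly what makes the gluing possible; with infinitely many maximal ideals there would be no finite CRT system to exploit.

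For the converse, let $I$ be an invertible fractional $R$-ideal. Writing $1 = \sum a_i b_i$ with $a_i \in I$ and $b_i \in I^{-1}$ shows that $I$ is finitely generated by the $a_i$, and as recalled in the proof of Proposition \ref{prop:normmult}, $I_\frm$ is principal at every maximal ideal $\frm$. Let $\frm_1,\dots,\frm_n$ be the maximal ideals of $R$. Since $I_{\frm_i}$ is principal and non-zero, $\frm_i I_{\frm_i} \subsetneq I_{\frm_i}$, and therefore $\frm_i I \subsetneq I$, so I may pick $y_i \in I \setminus \frm_i I$ for each $i$. The $\frm_i$ are pairwise coprime, so the Chinese Remainder Theorem provides $r_i \in R$ with $r_i \equiv 1 \pmod{\frm_i}$ and $r_i \equiv 0 \pmod{\frm_j}$ for $j\neq i$. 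I would then set $x = \sum_i r_i y_i \in I$ and claim that $I = xR$.

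The key step is verifying that $x$ generates $I_{\frm_k}$ for every $k$. Reducing modulo $\frm_k I$, each term $r_i y_i$ with $i\neq k$ lies in $\frm_k I$ (since $r_i \in \frm_k$), while $(r_k-1)y_k \in \frm_k I$, so $x \equiv y_k \not\equiv 0 \pmod{\frm_k I}$. Hence the image of $x$ generates the one-dimensional $R/\frm_k$-vector space $I_{\frm_k}/\frm_k I_{\frm_k}$, and Nakayama's lemma then yields $xR_{\frm_k} = I_{\frm_k}$. Consequently $I/xR$ is a finitely generated $R$-module whose localization at every maximal ideal is zero, so it vanishes and $I = xR$. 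For the ``in particular'' assertion: every non-zero ideal of a Dedekind domain is invertible and hence, in the semilocal case, principal by the main statement; the normalization of a local number ring is semilocal by the finiteness of primes above a given maximal ideal recorded in the opening proposition of Section \ref{sec:prelim}. The main obstacle is precisely this CRT gluing, which is the heart of the argument.
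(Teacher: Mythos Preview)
Your proof is correct, and at heart it follows the same CRT-gluing strategy as the paper: pick data that works at each maximal ideal separately and use the Chinese Remainder Theorem to produce a single element of $I$ that works everywhere. The execution of the verification step differs, however. You appeal to the local characterization of invertibility (each $I_{\frm_k}$ principal), glue to get $x$, and then invoke Nakayama plus the local-to-global vanishing of $I/xR$. The paper instead works symmetrically with the inverse: for each $\frm_k$ it picks $a_k\in I$ and $b_k\in I^{-1}$ with $a_kb_k\notin\frm_k$, forms the CRT-weighted sums $a=\sum\lambda_k a_k\in I$ and $b=\sum\lambda_k b_k\in I^{-1}$, checks directly that $ab$ is a unit, and concludes via the chain $(a)\subseteq I=abI\subseteq aI^{-1}I=(a)$. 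The paper's trick avoids any explicit appeal to Nakayama or to localization of $I/xR$, at the cost of the slightly clever double construction; your route is more modular and arguably the more standard modern argument.
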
 
\begin{proof}
One direction of the proof is trivial, because if $x\in R$ is non-zero, then the ideal $(x)$ has inverse $(x^{-1})$. Let's prove the other implication. Let $I$ be a fractional $R$-ideal. Multiplying by an appropriate element of the fraction field of $R$, we can assume that $I\subseteq R$. Observe that this doesn't affect the number of generators. Suppose that $I$ is an invertible $R$-ideal, with inverse $J$, i.e.~$IJ=R$. Let $\frm_1,\cdots,\frm_l$ be the maximal ideals of $R$. As $IJ\nsubseteq \frm_k$ for every $k$, there exist $a_k\in I,b_k\in J$ such that $a_kb_k\in R\setminus \frm_k$.
By the Chinese Remainder Theorem, for every $k$ there exists an element $\lambda_k\not\in \frm_k$ and $\lambda_k\in \frm_j$ for every $j\neq k$.
Then define
\[
a = \lambda_1a_1+\cdots+\lambda_la_l\in I,\quad b=\lambda_1b_1+\cdots+\lambda_l b_l\in J
\]
and consider the product:
\[ab = \sum_{1\leq i,j\leq l}\lambda_i\lambda_ja_ib_j.\]
Observe that $\lambda_i\lambda_ja_ib_j\not\in \frm_k$ if and only if $i=j=k$. Hence $ab\not\in\frm_k$ for every $k$ and it must therefore be a unit. Then
\[(a)\subseteq I=abI\subseteq aJI=aR=(a)\]
as required.
\end{proof}

The following version of the Nakayama Lemma is classical and will be used several times in the rest of the paper without mentioning.
\begin{lemma}[{\cite[Proposition 2.8]{AtiyahMacdonald69}}]
\label{lemma:Nakayama}
Let $R$ be a local ring with maximal ideal $\frm$ and let $M$ be a finitely generated $R$ module. Let $x_1,\ldots,x_n$ be elements of $M$ whose images in $M/\frm M$ form a basis over $R/\frm$. Then the $x_i$ generate $M$.
\end{lemma}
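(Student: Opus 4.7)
The plan is to reduce the statement to the more basic form of Nakayama's lemma: if $P$ is a finitely generated module over a local ring $(R,\frm)$ with $\frm P = P$, then $P=0$. This in turn follows from the determinant trick, which produces an element of $1+\frm$ annihilating $P$; such an element is a unit because $R$ is local.

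First I would set $N = Rx_1 + \cdots + Rx_n \subseteq M$ and consider the quotient $P = M/N$. Since $M$ is finitely generated, so is $P$.

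Next, I would translate the hypothesis on the $x_i$ into a statement about $P$. The assumption that the images of $x_1,\ldots,x_n$ generate $M/\frm M$ as an $R/\frm$-vector space means exactly that $M = N + \frm M$. Passing to the quotient by $N$, this reads $P = \frm P$.

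Finally, I would apply the basic form of Nakayama recalled in the first paragraph to conclude $P = 0$, i.e.\ $M = N$, which is the desired statement. The only nontrivial ingredient is the determinant trick; everything else is a bookkeeping step, so there is no real obstacle to this argument.
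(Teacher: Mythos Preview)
Your argument is correct and is exactly the standard reduction to the basic Nakayama statement via the determinant trick. Note, however, that the paper does not actually supply its own proof of this lemma: it simply records the result as classical and cites \cite[Proposition~2.8]{AtiyahMacdonald69}, so there is no original argument to compare against---your proof is essentially the one found in that reference.
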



\section{Quadratic and degree 4 case}
\label{sec:quadquartcases}
In this section we prove that every quadratic order is super-multiplicative. This result is a consequence of Theorem \ref{mainthm} stated in the introduction. We report this particular case separately because the argument of the proof is different and of its own interest. We will also exhibit in the end of this section an example that shows that an analogous theorem is not true for orders in a number field of degree $4$.

\begin{lemma}
\label{lemma:quadmult}
	Let $R$ be an order in a quadratic field $K$. Let $I$ be a non-zero $R$-ideal and $R_I$ its multiplier ring, i.e.~$R_I=\set{x\in K:\ xI\subseteq I}$. Then $I$ is an invertible ideal of $R_I$.
\end{lemma}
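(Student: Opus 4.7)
The plan is to reduce to the local case and then invoke a structural fact specific to quadratic orders.

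First, I would use that invertibility is a local property: $I$ is invertible as $R_I$-ideal iff for every maximal ideal $\frm$ of $R_I$ the localization $I_\frm$ is principal in $T := (R_I)_\frm$. A short argument, using that $I$ is finitely generated over $R_I$, shows that the multiplier ring of $I_\frm$ in $K$ is again $T$. Let $\tilde T$ denote the integral closure of $T$ in $K$; by Proposition~\ref{prop:semilocal}, $\tilde T$ is a PID with one or two maximal ideals.

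Next I would rescale $I_\frm$ so that $T \subseteq I_\frm \subseteq \tilde T$. Since $\tilde T$ is a PID, $I_\frm \tilde T$ is principal; dividing by a generator I may assume $I_\frm \subseteq \tilde T$ and $I_\frm \tilde T = \tilde T$, so $I_\frm$ is not contained in any proper $\tilde T$-ideal. I then produce a unit of $\tilde T$ inside $I_\frm$: if $\tilde T$ is a DVR, any element of $I_\frm$ outside the maximal ideal is a unit; if $\tilde T$ has two maximal ideals $\frM_1, \frM_2$, choose $y_i \in I_\frm \setminus \frM_i$ and observe that the only nontrivial case is $y_1 \in \frM_2$ and $y_2 \in \frM_1$, whence $y_1 + y_2 \in I_\frm$ avoids both $\frM_i$. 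Dividing by this unit gives $1 \in I_\frm$, so $T \subseteq I_\frm \subseteq \tilde T$.

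The input specific to the quadratic setting is that $R_I$, being an order in the quadratic field $K$, has the form $\Z + g\cO_K$ for some $g \in \Z_{>0}$. Writing $\cO_K = \Z[\omega]$ and letting $p$ be the residue characteristic at $\frm$ and $n := \ord_p(g)$, one computes $T = \Z_{(p)} + p^n\omega\Z_{(p)}$ and $\tilde T = \Z_{(p)} + \omega\Z_{(p)}$, so $\tilde T/T \cong \Z/p^n\Z$ is cyclic. The corresponding intermediate $T$-submodules form a chain $T = T_n \subsetneq T_{n-1} \subsetneq \cdots \subsetneq T_0 = \tilde T$ with $T_j = \Z_{(p)} + p^j\omega\Z_{(p)}$, and a direct check (using the minimal polynomial of $\omega$) shows each $T_j$ is in fact a subring of $\tilde T$.

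To conclude: the inclusions $T \subseteq I_\frm \subseteq \tilde T$ force $I_\frm = T_j$ for some $j$; since $1 \in T_j$, this subring equals its own multiplier ring, but we are assuming that multiplier is $T$, so $I_\frm = T$ is principal. The main obstacle is the structural step --- that every intermediate $T$-submodule between $T$ and $\tilde T$ is a subring --- which is exactly what singles out the quadratic case and fails in higher degree, presaging the degree-$4$ counterexample promised later in this section.
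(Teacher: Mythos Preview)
Your argument is correct, but it takes a genuinely different route from the paper's proof. The paper argues by contradiction: if $I_\p$ is not principal for some maximal ideal $\p$ of $R_I$ lying over the rational prime $p$, then Nakayama forces $\dim_{R_I/\p}(I/\p I)\geq 2$, while Lemma~\ref{lemma:principalideal} gives $[I:pI]=p^2$; comparing indices yields $\p I = pI$, hence $p^{-1}\p\subseteq R_I$, so $\p=pR_I$ and $R_{I,\p}$ is a DVR after all. This is a short index computation that never needs to classify the intermediate orders. Your approach instead rescales so that $T\subseteq I_\frm\subseteq \tilde T$, invokes the cyclicity of $\tilde T/T$ to list all intermediate $T$-submodules as the chain $T_j=\Z_{(p)}+p^j\omega\Z_{(p)}$, checks each $T_j$ is a subring (hence its own multiplier ring), and concludes $I_\frm=T$ from the localization identity $(I_\frm:I_\frm)=(R_I)_\frm$. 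The paper's argument is shorter and more self-contained; yours is more structural and makes explicit the ``chain of orders'' phenomenon that is the real quadratic-specific input --- essentially pre-proving Lemma~\ref{lemma:cycicpgroup} and anticipating the way that same cyclicity is exploited in the proof of the super-multiplicativity theorem that follows. One small point to tighten: your identification $T=\Z_{(p)}+p^n\omega\Z_{(p)}$ tacitly assumes $(R_I)_\frm = R_I\otimes_\Z\Z_{(p)}$, which holds when $n\geq 1$ because the right-hand side is then local; when $n=0$ one has $T=\tilde T$ and the conclusion is immediate, so no harm is done.
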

\begin{proof}
	By \cite[11.3, p.80]{mats89} it suffices to show that the localization of $I$ at every maximal ideal $\p$ of $R_I$ is principal.
	Assume that this is not the case, say that $I_\p$ is not principal.
	Observe that if $\p$ is above the rational prime $p$ we cannot have $pR_I=\p$, because $R_{I\p}$ would be a DVR and $I_\p$ would be invertible. 
	As $ [R_I:pR_I]=p^2 $ and $ pR_I \subsetneq \p $, we have $[\p : pR_I ]= [R_I:\p] =p$ and $R_I/\p\simeq \F_p$.
	By Lemma~\ref{lemma:principalideal} we have $[I:pI]=[R_I:pI]/[R_I:I]=[R_I:pR_I]=p^2$. As $I_\p$ is not principal, by Nakayama's Lemma we have that $I_\p/\p I_\p\simeq I/\p I$ is a $R_{I\p}/\p$-vector space of dimension $2$. Hence also $[I:\p I]=p^2$
	which implies $pI=\p I$ because $pI\subseteq \p I$ and they have the same index in $I$. So by the definition of multiplier ring  $p^{-1}\p\subseteq R_I$, hence $pR_I=\p$ by the maximality of $\p$. Contradiction.
	So $I$ is an invertible $R_I$-ideal.
\end{proof}

\begin{lemma}
\label{lemma:cycicpgroup}
Let $R$ be a quadratic order with integral closure $\tilde R$ and consider the localizations at a prime number $p\in \Z$, namely $\tilde{R}_{(p)}=\tilde{R}\otimes \Z_{(p)}$ and $R_{(p)}=R\otimes \Z_{(p)}$.  Then we have that $\tilde{R}_{(p)}/R_{(p)}\simeq \Z/p^n\Z$ for some $n\in \Z_{\geq 0}$. 
\end{lemma}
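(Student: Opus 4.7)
The plan is to reduce the statement to a computation of the abelian-group structure of $\tilde R/R$, then obtain the localized version by flatness of $\Z_{(p)}$ over $\Z$. The key observation is that every quadratic order $R$ is of the form $R=\Z+\Z(f\omega)$ for some positive integer $f$, where $\{1,\omega\}$ is a $\Z$-basis of $\tilde R$.

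First I would fix an integral basis $\tilde R=\Z\oplus\Z\omega$. Since $R$ contains $\Z$ and has finite index in $\tilde R$, consider the projection $\pi\colon \tilde R\to \tilde R/\Z\simeq \Z$ defined by $a+b\omega\mapsto b$. The image $\pi(R)$ is a non-zero subgroup of $\Z$, so it equals $f\Z$ for some integer $f\geq 1$. Since $\Z=\ker\pi\subseteq R$, the ring $R$ is the full preimage $\pi^{-1}(f\Z)$, giving
\[ R=\Z+\Z\cdot f\omega. \]
Consequently the induced surjection $\tilde R\to \Z/f\Z$, $a+b\omega\mapsto b\bmod f$, has kernel exactly $R$, yielding an isomorphism $\tilde R/R\simeq \Z/f\Z$ of abelian groups.

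Finally, since $\Z_{(p)}$ is flat over $\Z$, localization commutes with the quotient, so
\[ \tilde R_{(p)}/R_{(p)}\simeq (\tilde R/R)\otimes_\Z \Z_{(p)}\simeq \Z/f\Z\otimes_\Z \Z_{(p)}\simeq \Z/p^n\Z, \]
where $n=v_p(f)$; the last isomorphism uses that if $f=p^nm$ with $\gcd(m,p)=1$ then $m$ is a unit in $\Z_{(p)}$. The entire argument is short and involves no serious obstacle; the only step requiring care is the identification $R=\Z+\Z\cdot f\omega$, which follows directly from the finite-index hypothesis together with $\Z\subseteq R$.
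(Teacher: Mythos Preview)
Your proof is correct and follows essentially the same approach as the paper: both arguments show that $\tilde R/R$ is a cyclic group and then pass to the $p$-part by localization. Your version is more explicit, identifying $R=\Z+f\omega\Z$ and $\tilde R/R\simeq\Z/f\Z$ via the conductor $f$, whereas the paper simply invokes the structure theorem for finite abelian groups and asserts cyclicity ``as $R$ is quadratic''; the content is the same.
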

\begin{proof}
Note that $\tilde{R}/R$ is a finite abelian group which can be decomposed in the product of finitely many cyclic groups with order a prime power. When we localize at $p$ we consider only the $p$-part of this decomposition. As $R$ is quadratic what is left is a cyclic group.
\end{proof}

\begin{theorem}
	The ideal norm in a quadratic order is super-multiplicative.
\end{theorem}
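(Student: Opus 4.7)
The plan is to reduce to the semilocal case using Lemma~\ref{lemma:smlocal} applied to $\Z\subseteq R$: it suffices to prove super-multiplicativity of $R':=R_{(p)}$ for each rational prime $p$. Write $\tilde R':=\tilde R_{(p)}$. By Lemma~\ref{lemma:cycicpgroup}, $\tilde R'/R'$ is a cyclic $\Z_{(p)}$-module, so its sub-$\Z_{(p)}$-modules form a chain under inclusion; in particular, every subring of $\tilde R'$ containing $R'$ is linearly ordered by inclusion.

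Given non-zero ideals $I,J$ of $R'$, their multiplier rings $R'_I,R'_J$ are such overrings (elements of a multiplier ring are integral over $R'$, hence lie in $\tilde R'$), so without loss of generality $R'_I\subseteq R'_J$. By Lemma~\ref{lemma:quadmult} the ideal $I$ is invertible over $R'_I$ and $J$ is invertible over $R'_J$, and since both overrings are semilocal (being integral extensions of the semilocal $R'$), Proposition~\ref{prop:semilocal} yields $I=\alpha R'_I$ and $J=\beta R'_J$ with $\alpha,\beta\in R'$. Then
\[IJ=\alpha\beta R'_I R'_J=\alpha\beta R'_J.\]

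I then compute the three norms using the generalized index formula:
\[N(I)=\frac{[R'_I:\alpha R'_I]}{[R'_I:R']},\quad N(J)=\frac{[R'_J:\beta R'_J]}{[R'_J:R']},\quad N(IJ)=\frac{[R'_J:\alpha R'_J]\cdot[R'_J:\beta R'_J]}{[R'_J:R']}.\]
The crux is the identity $[R'_I:\alpha R'_I]=[R'_J:\alpha R'_J]$, which I obtain by equating the two factorizations of $[R'_J:\alpha R'_I]$---one through $\alpha R'_J$, using the isomorphism $R'_J/R'_I\simeq \alpha R'_J/\alpha R'_I$ given by multiplication by $\alpha$; the other through $R'_I$ directly. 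Substituting produces $N(IJ)=[R'_I:R']\cdot N(I)\cdot N(J)$, and the desired inequality follows because $[R'_I:R']\geq 1$. The main obstacle is precisely this index bookkeeping---it is easy to mis-order factors and arrive at the wrong-direction inequality---so I would cross-check the computation against $R=\Z[2i]$ with $I=(2,2i)$, where $R'_I=\tilde R'=\Z_{(2)}[i]$ and $[R'_I:R']=2$ accounts exactly for the jump $N(I^2)=8=2\cdot N(I)^2$.
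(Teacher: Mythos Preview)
Your argument is correct and follows essentially the same route as the paper: reduce to the semilocal ring $R_{(p)}$, use Lemma~\ref{lemma:quadmult} and Proposition~\ref{prop:semilocal} to write the localized ideals as $\alpha R_{I(p)}$ and $\beta R_{J(p)}$, invoke Lemma~\ref{lemma:cycicpgroup} to linearly order the multiplier rings, and then do the index bookkeeping to obtain $N(IJ)/\bigl(N(I)N(J)\bigr)=[R'_I:R']\geq 1$. The only cosmetic point is that Lemma~\ref{lemma:quadmult} is stated for ideals of the order $R$, not of $R_{(p)}$; you should either remark that its proof goes through verbatim for $R_{(p)}$, or (as the paper does) apply it to ideals of $R$ first and then localize.
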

\begin{proof}
	Let $R$ be a quadratic order and $I,J$ two non-zero ideals of $R$. We want to show that
	\[\dfrac{[R:IJ]}{[R:I][R:J]}\geq 1.\]
	Let $p$ be an arbitrary rational prime, we want to prove that
	\[\dfrac{[R_{(p)}:I_{(p)}J_{(p)}]}{[R_{(p)}:I_{(p)}][R_{(p)}:J_{(p)}]}\geq 1. \label{local rel}\tag{*}\]
	By Lemma \ref{lemma:quadmult} we have that $I$ (resp.~$J$) is invertible in its multiplier ring $R_I$ (resp.~$R_J$). Note that if $\q$ is a maximal $R_I$-ideal above the rational prime $q$, then $\q\cap (\Z\setminus (p))=(q)\setminus (p)$ which is empty if and only if $p=q$. This means that the maximal ideals of $R_{I(p)}$ are exactly the ones above $p$ and similarly for $R_{J(p)}$. So the localization of $I$ (resp.~$J$) at $(p)$ is a principal ideal of $R_{I(p)}$ (resp.~$R_{J(p)}$) by Proposition \ref{prop:semilocal} . Say that we have $I_{(p)}=xR_{I(p)}$ and $J_{(p)}=yR_{J(p)}$ and observe that $R_{I(p)}$ and $R_{J(p)}$ are both $R_{(p)}$-fractional ideals.  So by Lemma \ref{lemma:principalideal} 
	\begin{align*}
	[R_{(p)}:I_{(p)}] &=[R_{(p)}:R_{I(p)}][R_{(p)}:xR_{(p)}],\\
	[R_{(p)}:J_{(p)}] &=[R_{(p)}:R_{J(p)}][R_{(p)}:yR_{(p)}],\\
	[R_{(p)}:I_{(p)}J_{(p)}]&=[R_{(p)}:xR_{I(p)}yR_{J(p)}]=\\
	&=[R_{(p)}:R_{I(p)}R_{J(p)}][R_{(p)}:xR_{(p)}][R_{(p)}:yR_{(p)}].
	\end{align*}
	If we substitute these equalities in (\ref{local rel}) we get:	
	\[\dfrac{[R_{(p)}:R_{I(p)}R_{J(p)}]}{[R_{(p)}:R_{I(p)}][R_{(p)}:R_{J(p)}]} = \dfrac{[R_{I(p)}:R_{(p)}][R_{J(p)}:R_{(p)}]}{[R_{I(p)}R_{J(p)}:R_{(p)}]}.\]
	As $\tilde{R}_{(p)}/R_{(p)}$ is a cyclic $p$-group by Lemma \ref{lemma:cycicpgroup},  the lattice of its subgroups is totally ordered w.r.t.~the inclusion relation. Then as $R \subseteq R_I,R_J\subseteq \tilde{R}$, we have that $R_{I(p)}\subseteq R_{J(p)}$ or $R_{J(p)}\subseteq R_{I(p)}$. Assume that the first one holds, then $R_{I(p)}R_{J(p)}=R_{J(p)}$. So we have:
	\[\dfrac{[R_{I(p)}:R_{(p)}][R_{J(p)}:R_{(p)}]}{[R_{I(p)}R_{J(p)}:R_{(p)}]} = \dfrac{[R_{I(p)}:R_{(p)}][R_{J(p)}:R_{(p)}]}{[R_{J(p)}:R_{(p)}]}= [R_{I(p)}:R_{(p)}] \geq 1.\]
	If we have that $R_{J(p)}\subseteq R_{I(p)}$ we proceed in an analogous way.
	As this inequality holds for the localization at every rational prime $p$, by Proposition \ref{prop:lengthproduct} it holds also for the original quotient, hence we get the desired inequality for the global norms.
\end{proof}
	
As we have understood the quadratic case, then we will move to extensions of $\Q$ of higher degree. The next example shows that we cannot prove an analogous theorem for the degree 4 case. 

\begin{example}
\label{ex:degree4}
Consider the field $\Q(\alpha)$, where $\alpha$ is the root of a monic irreducible polynomial of degree $4$ with integer coefficients. Consider the order generated by the ring $\Z$ and the ideal $p\Z[\alpha]$, say $R=\Z + p\Z[\alpha]$, where $p$ is a rational prime number. Take the $R$-ideals $I=pR+ p\alpha R$ and $J=pR + p\alpha ^2R$ and the maximal ideal $M=p\Z[\alpha]$. It's easy to verify that
\begin{align*}
&R = \Z \oplus p\alpha \Z \oplus p\alpha^2 \Z \oplus p\alpha^3 \Z, & & I = p\Z \oplus p\alpha \Z \oplus p^2\alpha^2 \Z \oplus p^2\alpha^3 \Z,\\
&J = p\Z \oplus p^2\alpha \Z \oplus p\alpha^2 \Z \oplus p^2\alpha^3 \Z, & &M = p\Z \oplus p\alpha \Z \oplus p\alpha^2 \Z \oplus p\alpha^3 \Z,\\
&IJ = p^2\Z \oplus p^2\alpha \Z \oplus p^2\alpha^2 \Z \oplus p^2\alpha^3 \Z, & &IM = p^2\Z \oplus p^2\alpha \Z \oplus p^2\alpha^2 \Z \oplus p^2\alpha^3 \Z,
\end{align*}
which gives us $N(I)=N(J)=p^3$, $N(M)=p$, $N(IJ)=N(IM)=p^5$. Hence \[ p^6=N(I)N(J)>N(IJ)=p^5,\qquad p^4=N(I)N(M)<N(IM)=p^5.\]
\end{example}


\section{Proof of Theorem \ref{thm:firstimpl}}
\label{sec:firstimpl}
In this section we introduce a convenient notation for the maximal number of generators for the ideals of a commutative ring and discuss how this quantity behaves when we localize or extend the ring. The rest of the section is devoted to the proof of Theorem \ref{thm:firstimpl}.

\begin{definition}
Let $R$ be a commutative ring. We define
\[ g(R):=\sup_{\substack{I\subset R\\ \text{ideal}}}\Bigl( \inf_{\substack{S\subset I\\ I=\Span{S}}} \#S  \Bigr).  \]
\end{definition}

\begin{remark}
If $R$ is a commutative domain then $g(R)$ is the bound for the cardinality of a minimal set of generators of every fractional ideal $I$. In fact, by the definition of a fractional ideal, there exists a non-zero element $x$ in the fraction field of $R$ such that $xI\subseteq R$. So $xI$ is an $R$-ideal and hence can be generated by $g(R)$ elements, so $I$ can be generated by the same elements multiplied by $x^{-1}$.
\end{remark}

\begin{remark}
\label{rmk:extension}
Let $R\subset R'$ be an extension of commutative domains such that the abelian group $R'/R$ has finite exponent, say $n$. Then we have $g(R')\leq g(R)$. In fact if $J$ is an $R'$-ideal, then $nJ\subseteq R$, and hence $J$ is a fractional $R$-ideal. In particular we are in this situation if $R$ is a number ring and $R'$ is contained in the normalization $\tilde R$ of $R$, because the index $[\tilde R : R]$ is finite.
\end{remark}

\begin{remark}
Let $R$ be a number ring inside a number field $K$. We have $g(R)\leq [K:\Q]$ and this bound is sharp, in the sense that we can find an order $R'$ in $K$ such that $g(R')=[K:\Q]$. Let $\cO_K$ be the maximal order of $K$. Let $I$ be any $R$-ideal. As $R$ is Noetherian, $I$ can be generated by a finite set of elements, say $x_1,\cdots,x_d$. We can find an integer $n\geq 1$ such that $nx_1,\cdots,nx_d\in \cO_K$. Then observe that $I'=nI\cap (\cO_K\cap R)$ is an ideal of $\cO_K \cap R$, so it can be generated over $\Z$ by $[K:\Q]$ elements, say $\alpha_1,\cdots,\alpha_{[K:\Q]}$. As $I'R=nI$, we have that $\alpha_1/n,\cdots,\alpha_{[K:\Q]}/n$ generate $I$ over $R$. Hence $g(R)\leq [K:\Q]$.  To prove the second part, let $\alpha$ be an algebraic integer and put $K=\Q(\alpha)$. Consider $R'=\Z+p\Z[\alpha]$ where $p$ is a rational prime number. Then $\frm=p\Z[\alpha]$ is a maximal ideal of $R'$ and $\dim_{\F_p} \frm/\frm^2=[K:\Q]$, so $g(R')=[K:\Q]$.
\end{remark}

We have a nice description of the behavior of $g(R)$ for a number ring $R$ when we localize at a maximal ideal.
\begin{lemma}
\label{lemma:d-gen}
Let $R$ be a number ring, with normalization $\tilde{R}$. Let $I$ be an $R$-ideal. For every integer $d\geq 2$ the following are equivalent:
\begin{enumerate}
\item \label{d-gen:1} the $R$-ideal $I$ can be generated by $d$ elements;
\item \label{d-gen:2} for every maximal ideal $\p$ of $R$, the $R_\p$-ideal $I_\p$ can be generated by $d$ elements.
\end{enumerate}
\end{lemma}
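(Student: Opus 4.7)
The direction \ref{d-gen:1} $\Rightarrow$ \ref{d-gen:2} is immediate: any generating set of $I$ over $R$ localizes to a generating set of $I_\p$ over $R_\p$ at every maximal $\p$.

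For the converse, assume $I \neq 0$ and let $\p_1, \ldots, \p_n$ be the finitely many maximal ideals of $R$ containing $I$ (finite since $R/I$ is a finite ring); at every other maximal $\p$ we have $I_\p = R_\p$. My plan is to patch local $d$-element generating sets into a global one via the Chinese Remainder Theorem. For each $i$, the hypothesis together with Nakayama's lemma yields $\dim_{R/\p_i}(I/\p_i I) = \dim_{R/\p_i}(I_{\p_i}/\p_i I_{\p_i}) \leq d$, and since the $\p_i$ are pairwise coprime, CRT for modules furnishes a surjection $I \twoheadrightarrow \prod_i I/\p_i I$ with kernel $(\p_1 \cdots \p_n) I$. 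I choose for each $i$ a $d$-element spanning set of $I/\p_i I$, lift it simultaneously to elements $x_1, \ldots, x_d \in I$, and set $J_0 = (x_1, \ldots, x_d)$. By Nakayama, $(J_0)_{\p_i} = I_{\p_i}$ for each $i$, and after adjusting $x_1$ inside the kernel I may also assume $x_1 \neq 0$.

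To conclude $J_0 = I$, I still need $(J_0)_\p = R_\p$ at each maximal $\p$ outside $\{\p_1, \ldots, \p_n\}$. Since $x_1 \neq 0$ and $R/(x_1)$ is finite, there are only finitely many maximal primes $\q_1, \ldots, \q_m$ outside $V(I)$ containing the set $\{x_1, x_3, \ldots, x_d\}$; at any other maximal $\p \notin V(I)$ some $x_l$ with $l \neq 2$ already lies outside $\p$, so $(J_0)_\p = R_\p$ automatically. Invoking $d \geq 2$, I perturb $x_2$ by an element $z$ of the kernel $(\p_1 \cdots \p_n) I$, chosen via a second CRT so that $x_2 + z \notin \q_l$ for every $l$. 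This is feasible because $(\p_1 \cdots \p_n) I \not\subseteq \q_l$ for any $l$ (each $\p_i$ and $I$ are nonzero modulo $\q_l$), so the induced map $(\p_1 \cdots \p_n) I \twoheadrightarrow \prod_l R/\q_l$ is surjective by CRT, and each residue field $R/\q_l$ has at least two elements, leaving room to avoid the single forbidden coset $-\bar x_2 + \q_l$. Because $z \in \p_i I$ for every $i$, the image of $x_2$ in each $I/\p_i I$ is unchanged, so the local generation at each $\p_i$ is preserved; a local check at every maximal ideal then gives $(x_1, x_2 + z, x_3, \ldots, x_d) = I$.

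The main obstacle is this final perturbation: $z$ must lie inside every $\p_i I$ (to preserve Nakayama's condition at each $\p_i$) while simultaneously avoiding each coset $-\bar x_2 + \q_l$ (to destroy the extraneous inclusions at the $\q_l$). These demands can be reconciled only because $d \geq 2$ supplies a second generator $x_2$ free to perturb without disturbing $x_1$; with $d = 1$ this flexibility evaporates, paralleling the well-known failure of ``locally principal implies principal'' in non-PID Dedekind domains.
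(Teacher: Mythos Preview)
Your argument is correct. Both proofs rest on Nakayama's lemma and a Chinese Remainder patching, but they organise the finite ``bad'' set of primes differently. The paper first isolates the singular locus of $R$ via a composition series for $\tilde R/R$: away from those finitely many primes $R_\p$ is a DVR and $I_\p$ is principal, so one defines $S=\{\p:\dim_{R/\p} I/\p I\neq 1\}$ and then builds $x_1,\ldots,x_d$ \emph{inductively}, at each step enlarging the partial ideal $\p I+(x_1,\ldots,x_{k-1})$ at the remaining finitely many primes where it is still too small. You instead fix the set $V(I)=\{\p_1,\ldots,\p_n\}$ up front, lift all $d$ generators simultaneously from $\prod_i I/\p_i I$, and then make a \emph{single} perturbation of $x_2$ inside $(\p_1\cdots\p_n)I$ to kill the finitely many extraneous primes $\q_l$ containing the remaining generators. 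Your route has the minor advantage of never invoking the normalization $\tilde R$ (you only use that $R/I$ and $R/(x_1)$ are finite), while the paper's route has the feature that the set $S$ depends only on $R$, not on $I$. Both exploit $d\ge 2$ in the same essential way, needing a second generator free to absorb the CRT correction.
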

\begin{proof}
Observe that (\ref{d-gen:1}) implies (\ref{d-gen:2}) is an immediate consequence of the fact that $I_\p = I \otimes_R R_\p$. For the other direction, assume that $I_\p$ is $d$-generated, for every $\p$. We can choose the local generators to be in $I$, just multiplying by the common denominator, which is a unit in $R_\p$.
Now, $\tilde{R}/R$ has finite length as an $R$-module. Consider a composition series
\[ \tilde{R}/R = M_0\supset M_1 \supset \cdots \supset M_l =0. \]
All the factors $M_i/M_{i+1}$ for $i=0,\cdots,l-1$ are simple, hence of the form $R/\p_i$ where $\p_i$ is a maximal $R$-ideal.
If we localize at a maximal ideal $\p\neq \p_i$, for $i=0,\cdots,l-1$, all the factors disappear, and hence we have that $\tilde{R}_\p=R_\p$. Hence $R_\p$ is a local Dedekind domain. Hence $I_\p$ is a principal $R_\p$-ideal.
As the number of factors of the composition series is finite, this situation occurs for almost all the maximal ideals of $R$. In other words we can say that $I/\p I\simeq I_\p/\p I_\p$ is a 1-dimensional $R/\p$-vector space for almost all maximal ideals.
Then consider the finite set $S=\set{\p : \dim_{(R/\p)}I/\p I\neq 1} $.
By the Chinese Remainder Theorem we can pick an element $x_1\in I$ such that $x_1\not\in \p I$ for every $\p\in S$.
Now consider $T=\set{\p : I\supsetneq \p I + (x_1)}$, which is also finite because the ideals $I$ and $(x_1)$ are locally equal for almost all the maximal ideals of $R$ by a similar argument.
So we can build a set of global generators in the following way: with the Chinese Remainder Theorem take $x_2\in I\setminus (\p I+(x_1))$ for every $\p\in T$, $x_3\in I\setminus (\p I+(x_1,x_2))$ for every $\p\in T$ such that $I$ is not equal to $\p I+(x_1,x_2)$, and so on until $x_d$.
Then observe that $x_1,x_2,\cdots,x_d$ is a set of generators for $I$, because it is so locally at every prime: if $\p\in S$ then $I_\p=(x_1,x_2,\cdots,x_d)$ by construction, if $\p\in T\setminus S$ then $I_\p=(x_2)$ and if $\p\not\in T$ then $I_\p=(x_1)$. Now observe that $I=\bigcap_\p I_\p$ and so $x_1,x_2,\cdots,x_d$ generates the ideal $I$ over $R$.
\end{proof}

\begin{corollary}
Let $R$ be a number ring. If $g(R_\p)>1$ for some maximal $R$-ideal $\p$ then
\[g(R) = \sup_\p g(R_\p).\]
\end{corollary}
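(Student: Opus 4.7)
The plan is to set $d := \sup_\p g(R_\p)$ and prove the two inequalities $d \leq g(R)$ and $g(R) \leq d$ separately. The role of the hypothesis $g(R_\p) > 1$ for some $\p$ is precisely to guarantee $d \geq 2$, which is the threshold needed to invoke Lemma \ref{lemma:d-gen}; without this the statement can fail, as in a non-principal Dedekind domain where every $R_\p$ is a DVR (so $g(R_\p)=1$) but $g(R)$ may be $2$.

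For the inequality $d \leq g(R)$, I would establish that $g(R_\p) \leq g(R)$ for every maximal $\p$. Every nonzero ideal $J$ of $R_\p$ arises as $I_\p$ for the nonzero $R$-ideal $I := J \cap R$, since any $a/s \in J$ with $a \in R$ and $s \in R\setminus \p$ satisfies $a = s\cdot(a/s) \in J\cap R$. Any $R$-generating set of $I$ of cardinality $n$ localizes to an $R_\p$-generating set of $J$ of cardinality $n$, so in particular $g(R_\p) \leq g(R)$. Taking the supremum yields $d \leq g(R)$, and as a by-product $d$ is finite.

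For the reverse inequality $g(R) \leq d$, I would pick an arbitrary nonzero $R$-ideal $I$ and show it is $d$-generated. By the definition of $d$, each localization $I_\p$ admits an $R_\p$-generating set of size $g(R_\p) \leq d$. The hypothesis forces $d \geq 2$, so Lemma \ref{lemma:d-gen} applies and produces a global $R$-generating set of $I$ of cardinality $d$. Varying $I$ then yields $g(R) \leq d$, completing the proof.

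The only genuine obstacle is ensuring the threshold $d \geq 2$, and this is exactly what the hypothesis supplies; once Lemma \ref{lemma:d-gen} is available, the result reduces to the standard compatibility of generation with localization.
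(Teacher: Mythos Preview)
Your argument is correct and is exactly the intended one: the paper states this corollary without proof, as it is immediate from Lemma~\ref{lemma:d-gen}, and your write-up supplies precisely those details. One small point: the claim ``as a by-product $d$ is finite'' relies on $g(R)<\infty$, which holds because $g(R)\le [K:\Q]$ as noted in the remark preceding the lemma; you might cite that explicitly, or simply observe that if $d=\infty$ then $d\le g(R)$ already forces equality.
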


\begin{remark}
Let $R$ be a number ring such that $g(R_\p)=1$ for every maximal ideal, then $R$ is a Dedekind domain because every ideal $I$ has principal localizations, hence $I$ is invertible. Similarly as in the proof of the previous Lemma, we can show that $g(R)\leq 2$.
\end{remark}

Now that we have introduced some notation, we can start with the proof of Theorem~\ref{thm:firstimpl}.

\begin{lemma}
\label{lemma:linalg1}
Let $U,V,W$ be vector spaces over a field $k$, with $W$ of dimension $\geq 2$. Let $\vphi:U\otimes V \twoheadrightarrow W$ be a surjective linear map. Then there exists an element $u\in U$ such that $\dim_k \vphi(u\otimes V)\geq 2$, or there exists an element $v\in V$ such that $\dim_k \vphi(U\otimes v)\geq 2$.
\end{lemma}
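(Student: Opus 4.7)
I would prove the contrapositive: assume $\dim_k\varphi(u\otimes V)\leq 1$ for all $u\in U$ and $\dim_k\varphi(U\otimes v)\leq 1$ for all $v\in V$, and then show $\dim_k W\leq 1$. Write $\tilde\varphi:U\times V\to W$ for the induced bilinear map, and set $L_u=\varphi(u\otimes V)$ and $L^v=\varphi(U\otimes v)$; by hypothesis each of these is at most one-dimensional.

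Since $\varphi$ is surjective, $W=\sum_{u\in U} L_u$. If every nonzero $L_u$ lies in one common line $kw\subseteq W$, then $\dim_k W\leq 1$ and we are done, so assume we can pick $u_1,u_2\in U$ such that $L_{u_1}=kw_1$ and $L_{u_2}=kw_2$ with $w_1,w_2$ linearly independent. Because $\tilde\varphi(u_i,\cdot):V\to W$ has image contained in the line $kw_i$, there exist linear functionals $\alpha_1,\alpha_2:V\to k$ with
\[ \tilde\varphi(u_1,v)=\alpha_1(v)w_1,\qquad \tilde\varphi(u_2,v)=\alpha_2(v)w_2,\qquad\text{for all }v\in V,\]
and $\alpha_1,\alpha_2$ are both not identically zero (otherwise $L_{u_1}=0$ or $L_{u_2}=0$).

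Now I exploit the hypothesis for the element $u_1+u_2\in U$. We have $\tilde\varphi(u_1+u_2,v)=\alpha_1(v)w_1+\alpha_2(v)w_2$, and since $w_1,w_2$ are linearly independent while $L_{u_1+u_2}$ is at most one-dimensional, the pair $(\alpha_1(v),\alpha_2(v))\in k^2$ must lie in a fixed one-dimensional subspace as $v$ varies. As neither $\alpha_i$ is identically zero, this forces $\alpha_2=c\alpha_1$ for some $c\in k^{\times}$. Choose $v_1\in V$ with $\alpha_1(v_1)\neq 0$; then $\tilde\varphi(u_1,v_1)=\alpha_1(v_1)w_1$ and $\tilde\varphi(u_2,v_1)=c\alpha_1(v_1)w_2$ are linearly independent elements of $L^{v_1}$, contradicting $\dim_k L^{v_1}\leq 1$.

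The conceptual step that carries the proof is the one involving $u_1+u_2$: one has to see that if the rows $\alpha_i$ were linearly independent as functionals on $V$ then $L_{u_1+u_2}$ would already have dimension $2$, which is precisely what rules out the "generic" situation and pins $(\alpha_1,\alpha_2)$ down enough to produce a column $v_1$ realizing $\dim L^{v_1}\geq 2$. Once that is in place, the rest is bookkeeping.
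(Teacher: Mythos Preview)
Your proof is correct and follows essentially the same approach as the paper: argue by contradiction, pick $u_1,u_2$ giving independent images, and exploit the constraint on $L_{u_1+u_2}$ to produce a single $v$ with $\dim L^v\geq 2$. The paper's bookkeeping is marginally slicker---it picks pure tensors $w_i=\vphi(u_i\otimes v_i)$ from the outset and observes directly that $\vphi(u_1\otimes v_2)\in kw_1\cap kw_2=0$ (and symmetrically), so that $\vphi((u_1+u_2)\otimes v_i)=w_i$ immediately yields the contradiction without introducing the functionals $\alpha_i$---but the substantive idea is the same.
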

\begin{proof}
For contradiction, assume that $\vphi(u\otimes V)$ and $\vphi(U\otimes v)$ have dimension $\leq 1$, for every choice of $u\in U$ and $v\in V$. As $\vphi$ is surjective, $\{\vphi(u\otimes v):u\in U, v\in V\}$ is a set of generators of $W$. 
Since $W$ has dimension $\geq 2$, among these generators there are 2 which are linearly independent, say $w_1=\vphi(u_1\otimes v_1)$ and $w_2=\vphi(u_2\otimes v_2)$.
Observe
\[\vphi(u_1\otimes v_2)\in \vphi(u_1\otimes V)\cap \vphi(U\otimes v_2)=kw_1\cap kw_2 = 0.\]
Similarly we obtain also $\vphi(u_2\otimes v_1)=0$.
But then we have that both $\vphi((u_1+u_2)\otimes v_1)=w_1$ and  $\vphi((u_1+u_2)\otimes v_2)=w_2$ are in $\vphi((u_1+u_2)\otimes V)$. So it contains two linearly independent vectors and then it must have dimension $\geq 2$. Contradiction.
\end{proof}

\begin{lemma}
\label{lemma:cyclic}
Let $R$ be a commutative domain and $I,J\subset R$ two non-zero ideals, such that $IJ$ can be generated by $3$ elements. Let $\frm \subset R$ be a maximal ideal.
Then one of the following occurs:
\begin{enumerate}
 \item \label{lemma:cyclic:item:1} there exists a non-zero $x\in I_\frm$ such that $(IJ)_\frm/xJ_\frm$ is a cyclic $R_\frm$-module generated by an element of the form $\overline{ij}$ with $i\in I_\frm,j\in J_\frm$;
 \item \label{lemma:cyclic:item:2}there exists a non-zero $y\in J_\frm$ such that $(IJ)_\frm/yI_\frm$ is a cyclic $R_\frm$-module generated by an element of the form $\overline{ij}$ with $i\in I_\frm,j\in J_\frm$.
\end{enumerate}
In particular, if (\ref{lemma:cyclic:item:1}) holds then the morphism of $R_\frm$-modules  induced by the ``multiplication by $j$"
\[\dfrac{I_\frm}{xR_\frm}\overset{\cdot j}{\longrightarrow}\dfrac{(IJ)_\frm}{xJ_\frm}\]
is surjective, and similarly if (\ref{lemma:cyclic:item:2}) holds then the morphism of $R_\frm$-modules  induced by the ``multiplication by $i$"
\[\dfrac{I_\frm}{yR_\frm}\overset{\cdot i}{\longrightarrow}\dfrac{(IJ)_\frm}{yI_\frm}\]
is surjective.
\end{lemma}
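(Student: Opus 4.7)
The plan is to pass to the residue field $k = R_\frm/\frm R_\frm$ and invoke Lemma~\ref{lemma:linalg1}. The canonical surjection $I_\frm \otimes_{R_\frm} J_\frm \twoheadrightarrow (IJ)_\frm$, tensored with $k$, yields a surjective $k$-linear map
\[
\varphi : U \otimes_k V \twoheadrightarrow W,
\]
where $U = I_\frm/\frm I_\frm$, $V = J_\frm/\frm J_\frm$, and $W = (IJ)_\frm/\frm(IJ)_\frm$. Since $(IJ)_\frm$ is $3$-generated, Nakayama gives $\dim_k W \leq 3$; and $W \neq 0$ because $I$ and $J$ are non-zero in a domain. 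I would split on the value of $\dim_k W$.

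If $\dim_k W \leq 1$, then some simple tensor image $\varphi(\bar i \otimes \bar j)$ spans $W$; lifting $\bar i, \bar j$ to $i \in I_\frm, j \in J_\frm$, Nakayama forces $ij$ to generate $(IJ)_\frm$. Setting $x = i$, which is non-zero because $ij \neq 0$, makes $(IJ)_\frm/xJ_\frm$ cyclic with generator $\overline{ij}$, verifying case (\ref{lemma:cyclic:item:1}).

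If $\dim_k W \geq 2$, Lemma~\ref{lemma:linalg1} provides, possibly after swapping the roles of $I$ and $J$, a vector $u \in U$ with $\dim_k \varphi(u \otimes V) \geq 2$. Lift $u$ to $x \in I_\frm$, non-zero since $u \neq 0$. The quotient $W' := W/\varphi(u \otimes V)$ has dimension at most $\dim_k W - 2 \leq 1$. If $W' = 0$, then $xJ_\frm + \frm(IJ)_\frm = (IJ)_\frm$, so Nakayama gives $xJ_\frm = (IJ)_\frm$, and $(IJ)_\frm/xJ_\frm = 0$ is cyclic, generated by $\overline{x \cdot 0}$. If $\dim_k W' = 1$, then $W'$ is spanned by the image of some simple tensor $\varphi(\bar i \otimes \bar j)$; lifting to $i \in I_\frm, j \in J_\frm$, the class $\overline{ij}$ generates $(IJ)_\frm/xJ_\frm$ modulo $\frm$, and a further application of Nakayama to the finitely generated module $(IJ)_\frm/xJ_\frm$ promotes this to generation over $R_\frm$.

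For the ``in particular'' statement under (\ref{lemma:cyclic:item:1}): the multiplication-by-$j$ map $\bar a \mapsto \overline{aj}$ is well-defined because $xR_\frm \cdot j \subseteq xJ_\frm$, and it is surjective because any $\alpha \in (IJ)_\frm$ satisfies $\alpha \equiv r \cdot ij \pmod{xJ_\frm}$ for some $r \in R_\frm$, whence $ri \in I_\frm$ lands on $\alpha$ modulo $xJ_\frm$. Case (\ref{lemma:cyclic:item:2}) is symmetric. I expect the only real delicacy to be keeping the generator in ``product form'' $\overline{ij}$ rather than a general sum $\sum i_k j_k$; this is precisely what Lemma~\ref{lemma:linalg1} is designed to secure, so beyond that the argument is a standard Nakayama reduction.
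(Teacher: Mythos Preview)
Your proof is correct and follows essentially the same route as the paper's: reduce modulo $\frm$, apply Lemma~\ref{lemma:linalg1} to the bilinear product map, and use Nakayama to lift. The only cosmetic difference is that the paper extracts the product-form generator $\overline{ij}$ by first writing an arbitrary generator of $(IJ)_\frm/xJ_\frm$ as a finite sum $\sum_t \overline{i_t j_t}$ and then selecting one summand whose image is a basis of the one-dimensional quotient $S/\frm S$, whereas you obtain it directly by noting that simple tensor images span $W'$; these are the same idea phrased two ways.
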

\begin{proof}
Let $k$ denote the field $R/\frm$. Observe that $W=(IJ)_\frm/\frm (IJ)_\frm$ is a $k$-vector space of dimension $\leq 3$. First, if $W$ has dimension $1$ then we have that $(IJ)_\frm$ is a principal $R_\frm$-ideal and clearly there exists $x\in I_\frm$ such that $(IJ)_\frm/xJ_\frm$ is a cyclic $R_\frm$-module.
If the dimension of $W$ is $2$ or $3$, then consider the product map:
\[
\vphi:\dfrac{I_\frm}{\frm I_\frm}\otimes \dfrac{J_\frm}{\frm J_\frm}  \longrightarrow W, \qquad
\overline{i}\otimes \overline{j}  \longmapsto \overline{ij}.
\]
It is a surjective linear map of $k$-vector spaces. By Lemma \ref{lemma:linalg1} there exists $x\in I_\frm$  such that $\vphi(x\otimes (J_\frm/\frm J_\frm))$ has dimension $\geq 2$, or there exists $y\in J_\frm$  such that $\vphi( (I_\frm/\frm I_\frm)\otimes y)$ has dimension $\geq 2$. We will prove that if we are in the first case then (\ref{lemma:cyclic:item:1}) holds. The proof that the second case implies (\ref{lemma:cyclic:item:2}) is analogous. So assume that $\dim_k \vphi(x\otimes (J_\frm/\frm J_\frm))\geq 2$. Hence the quotient space
\[	 \dfrac{W}{\vphi(x\otimes (J_\frm/\frm J_\frm))}
\simeq \dfrac{(IJ)_\frm}{xJ_\frm+\frm(IJ)_\frm} \]
has dimension $\leq 1$. Moreover, it is easy to see that it is isomorphic to $S/\frm S$, where $S=(IJ)_\frm/xJ_\frm$. Hence we have that $S$ is a cyclic $R_\frm$-module.\\
We can be more precise saying that every generator of $S$ is of the form $ \sum_{t\in T}\overline{i_tj_t}$, where $T$ is a finite set of indexes, $i_t\in I_\frm$ and $j_t\in J_\frm$. In particular $\set{\overline{i_tj_t}}_{t\in T}$ is a finite set of generators for $S$. As the $k$-vector space $S/\frm S$ is $1$-dimensional,  among the projections $\overline{i_tj_t}$ there exists one $\overline{i_{t_0}j_{t_0}}$ which is a basis. Hence $i_{t_0}j_{t_0}$ is a generator of $S$. The last assertion follows immediately.
\end{proof}

\begin{proposition}
\label{prop:length}
Let $R$ be a commutative Noetherian $1$-dimensional domain. Let $I,J$ be two non-zero ideals such that $IJ$ can be generated by $3$ elements. Then we have that 
\[l\left( \dfrac{R_\frm}{I_\frm} \right) + l\left( \dfrac{R_\frm}{J_\frm} \right) \leq l\left( \dfrac{R_\frm}{(IJ)_\frm} \right). \]
\end{proposition}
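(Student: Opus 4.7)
The plan is to reduce the proposition to the surjection produced by Lemma \ref{lemma:cyclic}, with the rest being a bookkeeping exercise using additivity of length on a short chain of $R_\frm$-submodules. Since $IJ$ is $3$-generated, Lemma \ref{lemma:cyclic} applied to $I$, $J$, and $\frm$ gives one of two symmetric alternatives; I would assume without loss of generality that case \ref{lemma:cyclic:item:1} holds, so there is a nonzero $x\in I_\frm$ and an element $j\in J_\frm$ such that multiplication by $j$ induces a surjective $R_\frm$-module map
\[ \dfrac{I_\frm}{xR_\frm}\twoheadrightarrow \dfrac{(IJ)_\frm}{xJ_\frm}. \]
Case \ref{lemma:cyclic:item:2} is handled identically with the roles of $I$ and $J$ swapped.

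Next I would set up the chain of $R_\frm$-submodules $xJ_\frm \subseteq xR_\frm \subseteq I_\frm \subseteq R_\frm$ together with the alternative chain $xJ_\frm \subseteq (IJ)_\frm \subseteq R_\frm$ (the inclusion $xJ_\frm \subseteq (IJ)_\frm$ holds because $x\in I_\frm$). All the quotients appearing below have finite length since $R$ is a Noetherian $1$-dimensional domain and each of the ideals $I$, $J$, $IJ$, $xR$ contains a regular element. Because $R$ is a domain and $x\neq 0$, multiplication by $x$ is an $R$-module isomorphism $R_\frm \xrightarrow{\sim} xR_\frm$ which descends to an $R_\frm$-module isomorphism $R_\frm/J_\frm \xrightarrow{\sim} xR_\frm/xJ_\frm$. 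Computing $l(R_\frm/xJ_\frm)$ in two different ways via additivity of length along the two chains produces the identity
\[ l\!\left(\dfrac{R_\frm}{I_\frm}\right) + l\!\left(\dfrac{I_\frm}{xR_\frm}\right) + l\!\left(\dfrac{R_\frm}{J_\frm}\right) = l\!\left(\dfrac{R_\frm}{(IJ)_\frm}\right) + l\!\left(\dfrac{(IJ)_\frm}{xJ_\frm}\right). \]

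To conclude, I would invoke the surjection from Lemma \ref{lemma:cyclic}, which forces $l((IJ)_\frm/xJ_\frm) \leq l(I_\frm/xR_\frm)$; substituting into the displayed identity and cancelling $l(I_\frm/xR_\frm)$ from both sides gives exactly $l(R_\frm/I_\frm) + l(R_\frm/J_\frm) \leq l(R_\frm/(IJ)_\frm)$. I do not anticipate any genuine obstacle here: the real content of the proposition — that the $3$-generated hypothesis on $IJ$ forces $(IJ)_\frm/xJ_\frm$ to be cyclic and to receive a compatible surjection from $I_\frm/xR_\frm$ — has already been packaged into Lemma \ref{lemma:cyclic}. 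The only point requiring a moment of care is that $x$ and $j$ can be chosen nonzero, which is immediate from $I_\frm, J_\frm\neq 0$ and the construction in the lemma.
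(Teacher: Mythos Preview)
Your proposal is correct and follows essentially the same approach as the paper: both invoke Lemma~\ref{lemma:cyclic}, use the multiplication-by-$x$ isomorphism $R_\frm/J_\frm \simeq xR_\frm/xJ_\frm$, and compare lengths along two chains from $R_\frm$ down to $xJ_\frm$, applying the surjection $I_\frm/xR_\frm \twoheadrightarrow (IJ)_\frm/xJ_\frm$ to obtain the inequality. The only cosmetic difference is that the paper first isolates the intermediate inequality $l(R_\frm/I_\frm)\le l(J_\frm/(IJ)_\frm)$ and then adds $l(R_\frm/J_\frm)$, whereas you equate $l(R_\frm/xJ_\frm)$ along the two chains and cancel; the content is identical.
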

\begin{proof}
Assume that case (\ref{lemma:cyclic:item:1}) of Lemma \ref{lemma:cyclic} holds. Consider the ring $R_\frm/xJ_\frm$. It has finite length because it is Noetherian and zero-dimensional. Consider the following diagram of inclusions of $R_\frm$-ideals:
\begin{displaymath}
\xymatrix@ur@C=2pc{
J_\frm \ar@{-}[d] 		& R_\frm \ar@{-}|{||}[l] \ar@{-}[d] \\
(IJ)_\frm \ar@{-}[d] 	& I_\frm \ar@{-}[d] 		 		\\
xJ_\frm 				& xR_\frm \ar@{-}|{||}[l]
}
\end{displaymath}
These two chains define two series for $R_\frm/xJ_\frm$, and they can be refined to composition series. Observe that the multiplication by $x$ is an isomorphism of $R_\frm$ onto $xR_\frm$ and of $J_\frm$ onto $xJ_\frm$, so it induces a $R$-module isomorphism also on the quotients. Hence we have $l (R_\frm/J_\frm) = l (xR_\frm/xJ_\frm) $ and as the diagram of inclusions is commutative we have also $l(R_\frm/xR_\frm)=l(J_\frm/xJ_\frm)$. Moreover, as $I_\frm/xR_\frm$ is mapped onto $(IJ)_\frm/xJ_\frm$ by Lemma \ref{lemma:cyclic}, for every  factor of the composition series between $R_\frm$ and $I_\frm$ there exists a corresponding factor between $J_\frm$ and $(IJ)_\frm$. So we have
\[l\left( \dfrac{R_\frm}{I_\frm} \right) \leq l\left( \dfrac{J_\frm}{(IJ)_\frm} \right).\]
To finish the proof, it is sufficient to add $l(R_\frm/J_\frm)$ on both sides. If case (\ref{lemma:cyclic:item:2}) of Lemma \ref{lemma:cyclic} holds we get the same conclusion with a similar argument.
\end{proof}
Now we can conclude our proof:
\begin{proof}[Proof of Theorem \ref{thm:firstimpl}]
As every ideal can be generated by $3$ elements, for every pair of non-zero $R$-ideals $I$ and $J$, Proposition \ref{prop:length} implies
\[\#(R/\frm)^{l(R_\frm/(IJ)_\frm)}\geq \#(R/\frm)^{l(R_\frm/I_\frm)+l(R_\frm/J_\frm)},\]
for every maximal $R$-ideal $\frm$. Hence by Proposition \ref{prop:lengthproduct} we get
\[N(IJ)\geq N(I)N(J).\]
For the second statement, use Remark \ref{rmk:extension}
\end{proof}

\section{Proof of Theorem \ref{thm:secondimpl}}
\label{sec:secondimpl}
In this section we prove Theorem \ref{thm:secondimpl}.
Firstly, we will exhibit a bound for $g(R)$ for a local number ring $R$.
Secondly, we will give a sufficient condition such that this bound is $\leq 3$.
Finally, we will conclude the proof by moving from the local case to the global one.
\begin{lemma}
\label{lemma:linalg2}
Let $V$ be a finite dimensional vector space over a finite field $k$ such that 
\[V=V_1\cup\cdots\cup V_n,\]
where each $V_i$ is a proper subspace of $V$. Then $n>\# k$.
\end{lemma}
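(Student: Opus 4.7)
The plan is to exhibit an affine line in $V$ of cardinality $\#k$ that must be covered by at most $n-1$ of the subspaces, each meeting it in at most one point, thereby forcing $n - 1 \geq \#k$.

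First, I would reduce to the case where the covering $V = V_1 \cup \cdots \cup V_n$ is \emph{irredundant}, meaning no $V_i$ is contained in the union of the others: one can simply discard superfluous members of the family, and the resulting $n'$ satisfies $n' \leq n$, so the bound $n' > \#k$ immediately gives $n > \#k$.

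After this reduction, by irredundancy I may pick $v \in V_1 \setminus (V_2 \cup \cdots \cup V_n)$, and since $V_1$ is a proper subspace I may pick $w \in V \setminus V_1$. The key object is the affine line
\[ L = \set{w + tv \ :\ t \in k} \subseteq V, \]
which has exactly $\#k$ distinct elements because $v \neq 0$. I would then verify two claims:
\begin{enumerate}
\item $L \cap V_1 = \emptyset$: if $w + tv \in V_1$ for some $t$, then since $v \in V_1$ this would force $w \in V_1$, contradicting the choice of $w$.
\item $\#(L \cap V_i) \leq 1$ for every $i \geq 2$: if $w + t_1 v$ and $w + t_2 v$ both lay in $V_i$ with $t_1 \neq t_2$, then $(t_1 - t_2)v \in V_i$, hence $v \in V_i$, contradicting the choice of $v$.
\end{enumerate}
Combining these two observations with $L \subseteq V = V_1 \cup \cdots \cup V_n$ forces the $\#k$ points of $L$ to be distributed among $V_2, \ldots, V_n$ with at most one per subspace, yielding $\#k = \#L \leq n - 1$, i.e., $n \geq \#k + 1 > \#k$.

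The only subtle step is securing a vector lying in exactly one of the $V_i$'s; this is precisely what the irredundancy reduction provides, and everything else reduces to elementary linear algebra on the line $L$.
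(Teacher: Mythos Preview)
Your proof is correct, but it takes a different route from the paper's. The paper argues by a direct cardinality count: each proper subspace $V_i$ has at most $(\#k)^{\dim_k V - 1}$ elements, and since they all share the origin one gets
\[
(\#k)^{\dim_k V} = \#V \leq \Bigl(\sum_{i=1}^n \#V_i\Bigr) - (n-1) < n\,(\#k)^{\dim_k V - 1},
\]
from which $n > \#k$ follows immediately by dividing. Your argument instead reduces to an irredundant cover, isolates a vector $v$ lying in exactly one $V_i$, and tracks an affine line in the direction of $v$ through a point outside $V_1$; this is the classical ``affine line'' proof (in the spirit of Neumann's lemma on coset coverings). The paper's approach is shorter and needs no preliminary reduction, while yours is more geometric and has the advantage of working verbatim over infinite fields as well, showing that a vector space over an infinite field is never a finite union of proper subspaces. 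Both yield the same sharp bound $n \geq \#k + 1$.
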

\begin{proof}
As $V_i\subsetneqq V$, then it has codimension $\geq 1$, which implies that $\#V_i\leq(\#k)^{\dim_k V-1}$. As $\overline{0}\in V_1\cap\cdots\cap V_n$, then
\begin{align*}
(\#k)^{\dim_k V}=\#V &=\#(V_1\cup\cdots\cup V_n)\leq \\
&\leq \left(\sum_{i=1}^n \#V_i\right)-(n-1)<\sum_{i=1}^n\#V_i\leq n(\#k)^{\dim_k V-1}. 
\end{align*}
Then dividing by $(\#k)^{\dim_k V-1}$ we get $n>\#k$.
\end{proof}

\begin{lemma}
\label{lemma:2}
Let $R$ be a local number ring with maximal ideal $\frm$ and residue field $k$. Let $\tilde{R}$ be its normalization. Let $l$ be the number of distinct maximal ideals of $\tilde{R}$. If $l\leq \#k$ then for every $R$-ideal $I$, there exists $x\in I$ such that $I\tilde{R}=x\tilde{R}$.
\end{lemma}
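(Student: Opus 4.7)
The plan is to linearize the problem modulo $\frm I$ and apply Lemma \ref{lemma:linalg2} to avoid a union of proper subspaces. By Proposition \ref{prop:semilocal} the normalization $\tilde{R}$, being semilocal and Dedekind, is a PID; if $\frM_1,\ldots,\frM_l$ are its maximal ideals with corresponding valuations $v_1,\ldots,v_l$, then $I\tilde{R}=\prod_{i=1}^l \frM_i^{n_i}$ for some $n_i\geq 0$, and this ideal is principal in $\tilde R$. Since $I\subseteq I\tilde R$, any $x\in I$ automatically satisfies $v_i(x)\geq n_i$ for all $i$, and $x\tilde R = I\tilde R$ holds if and only if these inequalities are all equalities.

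For each $i$, I would introduce the ``bad set''
\[ W_i := I\cap \frM_i I\tilde{R} = \set{x\in I : v_i(x) > n_i}, \]
an $R$-submodule of $I$, and reduce the statement to showing $I\neq W_1\cup\cdots\cup W_l$. Two inclusions need to be checked. First, $W_i\subsetneq I$: otherwise $I\subseteq \frM_i I\tilde R$, which after multiplication by $\tilde R$ gives $I\tilde R \subseteq \frM_i I\tilde R$; cancelling the invertible ideal $I\tilde R$ yields $\tilde R\subseteq \frM_i$, a contradiction. Second, $\frm I\subseteq W_i$: since $R$ is local and $\tilde R$ is integral over $R$, every maximal ideal of $\tilde R$ contracts to $\frm$, so $\frm\tilde R\subseteq \frM_i$, giving $\frm I\subseteq \frm \tilde R\cdot I\subseteq \frM_i I\tilde R$.

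These two facts allow passage to the quotient $V:=I/\frm I$, which is a finite-dimensional vector space over the finite residue field $k=R/\frm$ (finite because $[R:\frm]<\infty$ for a number ring, and finite-dimensional because $R$ is Noetherian). Each $\overline{W_i}:=W_i/\frm I$ is then a proper $k$-subspace of $V$. Applying the contrapositive of Lemma \ref{lemma:linalg2}: if $V$ were covered by the $\overline{W_i}$, we would have $l>\#k$, contradicting the hypothesis $l\leq\#k$. Hence there exists $\bar x\in V\setminus \bigcup_i \overline{W_i}$; any lift $x\in I$ lies outside every $W_i$, so $v_i(x)=n_i$ for all $i$, which means $x$ and any generator of $I\tilde R$ differ by a unit of $\tilde R$, giving $x\tilde R=I\tilde R$.

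The main obstacle I expect is the careful bookkeeping in the two inclusions $W_i\subsetneq I$ and $\frm I\subseteq W_i$: the first relies on the invertibility of $I\tilde R$ inside the PID $\tilde R$, while the second exploits the crucial feature of the local hypothesis that every maximal ideal of $\tilde R$ contains $\frm$. Once these are in place, Lemma \ref{lemma:linalg2} delivers the conclusion essentially for free.
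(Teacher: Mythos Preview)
Your proof is correct and follows essentially the same route as the paper: both reduce to the vector space $I/\frm I$, identify the ``bad'' elements as those landing in $\frM_i I\tilde R$, observe these form proper $k$-subspaces, and invoke Lemma~\ref{lemma:linalg2}. The only cosmetic difference is that the paper phrases the proper subspaces as kernels of the natural maps $I/\frm I \to I\tilde R/\frM_i I\tilde R$, whereas you describe them as $(I\cap \frM_i I\tilde R)/\frm I$; these are the same subspaces, and your explicit verification that $\frm I\subseteq \frM_i I\tilde R$ is exactly the well-definedness of those maps.
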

\begin{proof}
The statement is trivially true if $I=0$. 
Assume that $I\neq 0$.
Denote the maximal ideals of $\tilde{R}$ by $\frm_1,\cdots,\frm_l$.
Consider the $k$-vector spaces $W=I/\frm I$ and $I\tilde{R}/\frm_iI\tilde{R}$.
For every $i$, define the map
\[\vphi_i: W \longrightarrow \dfrac{I\tilde{R}}{\frm_iI\tilde{R}}, \qquad x+\frm I \mapsto x+\frm_iI\tilde{R}.\]
Denote by $W_i$ the kernel of $\vphi_i$. The ideal $I$ is a set of generators of $I\tilde{R}$ as $\tilde{R}$-module and hence of $I\tilde{R}/\frm_iI\tilde{R}$. This means that $\vphi_i$ is not the zero map, i.e.~$W_i$ is a proper subspace of $W$, for every $i$. As $l\leq \#k$, by Lemma \ref{lemma:linalg2} we get that $W_1\cup \cdots \cup W_l\subsetneq W$ and so there exists $x\in I$ whose projection in $W$ is not in $W_i$, for every $i$. 
Observe that this condition means that $\ord_{\frm_i}(x)\leq \ord_{\frm_i}(I\tilde{R})$ for every $i$. 
Moreover $x\in I\subset I\tilde{R}$, so $\ord_{\frm_i}(x)\geq \ord_{\frm_i}(I\tilde{R})$ for every $i$. 
Since we have that $\ord_{\frm_i}(x) = \ord_{\frm_i}(I\tilde{R})$ for every $i$, we conclude that $x\tilde{R}=I\tilde{R}$.
\end{proof}
The next example proves that the hypothesis $l\leq \#k$ in the previous lemma cannot be omitted. It is a generalization of an example suggested by Hendrik Lenstra.

\begin{example}
 Let $p$ be a prime number and $K$ an extension of $\Q$ of degree $p+1$ where $p$ splits completely. 
 Let $A$ to the integral closure of $\Z_{(p)}$ in $K$. 
 Note that $p$ factors in $A$ as $pA=\q_1\q_2\cdots \q_{p+1}$ and $A/pA$ is isomorphic to the product of $p+1$ copies of $k=\F_p$. Let $R=\Z_{(p)}+pA$. It is a local subring of $A$ with integral closure $A$ and unique maximal ideal $pA$. Consider the surjective morphism $\vphi:A\to A/pA \tilde\to k^{p+1}$. As $R$ contains the kernel of $\vphi$ and $\vphi(R)\simeq k$
 we see that $R=\vphi^{-1}(\{(r,r,\ldots,r):r\in k\})$.
 This implies that the preimage under $\vphi$ of any additive subgroup of $k^{p+1}$ is a fractional $R$-ideal.
Define $J$ as the preimage of the additive subgroup generated by the elements $(1,0,1,1,...,1)$ and $(0,1,1,2,3,...,p-1)$. Observe that every element of $J \mod pA$ has the form $(x,y,x+y,x+2y,x+3y,...,x+(p-1)y)$ for some $0\leq x,y\leq p-1$ and hence it has a coordinate equal to $0$.
Moreover, for every index $i=1,...,p+1$ there exists an element with the $i$-th coordinate non-zero.
In particular we have $JA=A$ and hence $JA$ is a pricipal $A$-ideal generated by any unit, say $u$.
Observe that the coordinates of $\vphi(u)$ are all non-zero and so $u$ cannot be in $J$.
We conclude that  $J$ is a fractional $R$-ideal whose extension to $A$ cannot be generated by an element of $J$. 
\end{example}

\begin{lemma}
\label{lemma:4}
Let $R$ be a local number ring with maximal ideal $\frm$, residue field $k$ and normalization $\tilde{R}$. Let $l$ be the number of distinct maximal $\tilde R$-ideals. If $l\leq \#k$ then for every $R$-ideal $I$ we have that
\[\dim_k\dfrac{I}{\frm I}\leq \dim_k \dfrac{\tilde{R}}{\frm \tilde{R}}.\]
\end{lemma}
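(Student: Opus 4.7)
The plan is to construct a $k$-linear comparison map from $I/\frm I$ into $\tilde R/\frm\tilde R$ and show it is injective, which gives the dimension inequality immediately. The hypothesis $l\leq \#k$ enters exclusively through Lemma~\ref{lemma:2}, which will be applied twice: once to $I$ and once to $\frm$.

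By Lemma~\ref{lemma:2} applied to $I$, there exists $x\in I$ with $x\tilde R=I\tilde R$. Since $I\subseteq I\tilde R = x\tilde R$, every $y\in I$ yields an element $y/x\in \tilde R$, and we can define
\[\bar\varphi \colon \dfrac{I}{\frm I}\longrightarrow \dfrac{\tilde R}{\frm\tilde R},\qquad \bar y \longmapsto \overline{y/x}.\]
This is $k$-linear and well defined: any $y=\sum a_j z_j\in\frm I$ with $a_j\in\frm$ and $z_j\in I$ gives $y/x=\sum a_j(z_j/x)\in\frm\tilde R$, since each $z_j/x\in\tilde R$. If $\bar\varphi$ is injective, then $\dim_k I/\frm I \leq \dim_k \tilde R/\frm\tilde R$, which is exactly what we want.

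The injectivity of $\bar\varphi$ amounts to the equality $I\cap \frm I\tilde R=\frm I$; here $\frm I\subseteq I\cap\frm I\tilde R$ is obvious, and the content is the reverse inclusion. To analyze it, apply Lemma~\ref{lemma:2} a second time to $\frm$, obtaining $u\in\frm$ with $u\tilde R=\frm\tilde R$, so that $\frm I\tilde R = ux\tilde R$. If $y\in I$ satisfies $\bar\varphi(\bar y)=0$, then $y/x\in u\tilde R$ and hence $y=uxt$ for some $t\in \tilde R$. The task is to convert this factorization, in which $uxt$ lies a priori only in $\frm I\tilde R$, into an expression exhibiting $y$ as an element of $\frm I$.

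The conversion step is the main obstacle. The plan is to exploit the fact that $xt\in x\tilde R = I\tilde R$ is generated as $\tilde R$-module by the elements of $I$: pick a minimal generating set $y_1,\ldots,y_d$ of $I$ (so the $\bar y_i$ form a $k$-basis of $I/\frm I$), write $xt$ as a $\tilde R$-linear combination of the $y_i$'s, and use Nakayama together with the fact that $\tilde R$ is a semilocal Dedekind domain (Proposition~\ref{prop:semilocal}) to propagate the factor $u\in\frm$ into $R$-coefficients lying in $\frm$. An equivalent and probably cleaner route, avoiding the injectivity proof, is to iterate the Lemma~\ref{lemma:linalg2} avoidance trick used in the proof of Lemma~\ref{lemma:2}: starting from $x_1=x$, one inductively produces elements $x_1,\ldots,x_e\in I$ (with $e=\dim_k \tilde R/\frm\tilde R$) whose images $x_i/x$ form a $k$-basis of $\tilde R/\frm\tilde R$, and then verifies using Nakayama over $\tilde R$ (applicable because $\frm\tilde R$ is contained in the Jacobson radical of $\tilde R$) that these $x_i$ generate $I$ over $R$, yielding $\dim_k I/\frm I\leq e$ directly.
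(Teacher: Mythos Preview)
Your approach diverges from the paper's, and the central step---injectivity of $\bar\varphi$---is not merely unproved but actually false. Take the ring from the paper's final example: $\tilde R=A=\Z_{(p)}[\alpha]$ with $\alpha$ of degree~$4$ and $pA$ maximal (so $l=1\leq p=\#k$), and
\[
R=\Z_{(p)}\oplus p\alpha\Z_{(p)}\oplus p\alpha^2\Z_{(p)}\oplus p^2\alpha^3\Z_{(p)},\qquad
\frm=p\Z_{(p)}\oplus p\alpha\Z_{(p)}\oplus p\alpha^2\Z_{(p)}\oplus p^2\alpha^3\Z_{(p)}.
\]
Let $I=pR'$ where $R'=\Z_{(p)}+pA$; then $I\tilde R=pA$ and one may take $x=p$, so $J:=(1/x)I=R'$. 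One checks $\frm\tilde R=pA$ and $\frm J=\frm R'=\frm$ (because $p^2A\subseteq\frm$), whereas $J\cap\frm\tilde R=R'\cap pA=pA$. Thus $p\alpha^3\in J\cap\frm\tilde R$ but $p\alpha^3\notin\frm J$: the element $\overline{p\alpha^3}\in I/\frm I$ is nonzero yet maps to zero in $\tilde R/\frm\tilde R$. Your second route fails at the same place: the image of $J$ in $\tilde R/\frm\tilde R$ is $(R'+pA)/pA=R'/pA$, which is one-dimensional, so one cannot find $e=4$ elements of $I$ whose images $x_i/x$ form a $k$-basis of $\tilde R/\frm\tilde R$.

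The paper's argument sidesteps this entirely. It applies Lemma~\ref{lemma:2} \emph{once}, to $\frm$ rather than to $I$, obtaining $x\in\frm$ with $\frm\tilde R=x\tilde R$. Then multiplication by $x$ gives $[\tilde R:\frm\tilde R]=[\tilde R:x\tilde R]=[I:xI]$, and since $xI\subseteq\frm I\subseteq I$ this yields $[I:\frm I]\mid[\tilde R:\frm\tilde R]$. Both quotients being $k$-vector spaces, the divisibility of indices is exactly the desired inequality of dimensions. No comparison map is needed.
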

\begin{proof}
By Lemma \ref{lemma:2} we know that $\frm\tilde{R}=x\tilde{R}$ for some $x\in \frm$. Since the additive groups of $\tilde R/I$ and $x\tilde R/xI$ are isomorphic, we have
\[[\tilde{R}:\frm\tilde{R}]=[\tilde{R}:x\tilde{R}]=[I:xI]=[I:\frm I][\frm I:xI].\]
In particular, $[I : \frm I]$ divides $[\tilde{R}:\frm\tilde{R}]$, and as $I/\frm I$ and $\tilde{R}/\frm \tilde{R}$ are both $k$-vector spaces we get our statement on their $k$-dimensions.
\end{proof}

Now we would like to drop the hypothesis on the size of the residue field.
The construction described in the proof of the next theorem allows us to enlarge the residue field without losing information on the number of generators of any ideal. Compare with \cite{delcdvor00}[Section 3.1].

\begin{theorem}
\label{thm:local3gp}
Let $R$ be a local number ring with maximal ideal $\frm$, residue field $k$ and normalization $\tilde{R}$. Then for every $R$-ideal $I$ we have that
\[\dim_k\dfrac{I}{\frm I}\leq \dim_k \dfrac{\tilde{R}}{\frm \tilde{R}}.\]
\end{theorem}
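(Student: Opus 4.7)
The strategy will be to reduce to Lemma \ref{lemma:4} by constructing a finite free local extension $R\subseteq R'$ of local number rings such that the number of maximal ideals of $\tilde R'$ is at most $\#(R'/\frm R')$, while preserving the dimensions $\dim_k I/\frm I$ and $\dim_k\tilde R/\frm\tilde R$ under base change.

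Let $p$ denote the rational prime below $\frm$, and let $\frm_1,\ldots,\frm_l$ be the maximal ideals of $\tilde R$ with residue fields $k_1,\ldots,k_l$. I would pick a prime number $n$ with $n>[\Frac(R):\Q]$, with $n>[k_i:\F_p]$ for every $i$, and with $(\#k)^n\geq l$; such $n$ exists since all three lower bounds are finite. Then I would fix a monic irreducible polynomial $\bar g\in\F_p[T]$ of degree $n$, lift it to a monic $g\in\Z[T]$ of degree $n$, and set $R':=R[T]/(g(T))$, which is a free $R$-module of rank $n$.

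Next, I would verify the required properties of $R'$. First, $g$ stays irreducible over $K:=\Frac(R)$: for any root $\alpha$ of $g$, the subfield $K\cap\Q(\alpha)$ of $\Q(\alpha)$ has degree over $\Q$ dividing the prime $n$, and $n>[K:\Q]$ rules out $K\cap\Q(\alpha)=\Q(\alpha)$. Hence $K':=K[T]/(g)$ is a number field, $R'$ embeds in $K'$, and $R'$ is a number ring. The quotient $R'/\frm R'=k[T]/(\bar g)=:k'$ is a field of size $(\#k)^n$, so $\frm R'$ is the unique maximal ideal of $R'$, with residue field $k'$. Since $n$ is prime and $n>[k_i:\F_p]$, one has $\gcd(n,[k_i:\F_p])=1$, so $\bar g$ remains irreducible over each $k_i$. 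Because $\mathrm{disc}(g)\in\Z$ reduces modulo $p$ to the nonzero $\mathrm{disc}(\bar g)$, it is a unit in $R\subseteq\tilde R$, making $\tilde R[T]/(g)$ an unramified, and so Dedekind, extension of $\tilde R$; as it is a domain inside $K'$ that is integral over $R'$, it must coincide with $\tilde R'$. The maximal ideals of $\tilde R'$ correspond to irreducible factors of $\bar g$ over the various $k_i$, so there are exactly $l$ of them, and the inequality $l\leq(\#k)^n=\#k'$ is precisely the hypothesis of Lemma \ref{lemma:4} for $R'$.

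Finally, I would apply Lemma \ref{lemma:4} to the $R'$-ideal $IR'=I\otimes_R R'$, obtaining $\dim_{k'}(IR'/\frm IR')\leq\dim_{k'}(\tilde R'/\frm\tilde R')$. Freeness of $R'$ over $R$ gives $IR'/\frm IR'\simeq(I/\frm I)\otimes_k k'$ and $\tilde R'/\frm\tilde R'\simeq(\tilde R/\frm\tilde R)\otimes_k k'$, so these $k'$-dimensions agree with $\dim_k I/\frm I$ and $\dim_k\tilde R/\frm\tilde R$, completing the proof. The main obstacle is the simultaneous arrangement of three conditions on $R'$, namely irreducibility of $g$ over $K$, irreducibility of $\bar g$ over every $k_i$, and the size condition $\#k'\geq l$; choosing $n$ to be a sufficiently large prime handles all three in one stroke.
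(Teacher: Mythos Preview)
Your proof is correct and follows essentially the same strategy as the paper: base-change $R$ to $R'=R[T]/(g)$ for a monic $g\in\Z[T]$ that is irreducible modulo $p$ and of degree coprime to each $[k_i:\F_p]$, so that the residue field becomes large enough for Lemma~\ref{lemma:4} to apply, and then descend via the identifications $IR'/\frm IR'\simeq (I/\frm I)\otimes_k k'$ and $\tilde R'/\frm\tilde R'\simeq(\tilde R/\frm\tilde R)\otimes_k k'$. The only difference is cosmetic: you take $n=\deg g$ to be a sufficiently large prime, which also hands you a direct degree argument for the irreducibility of $g$ over $\Frac(R)$, a point the paper asserts (``$T$ is a local domain'') but does not spell out.
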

\begin{proof}
We want to apply Lemma \ref{lemma:4}. Let $\frm_1,\cdots,\frm_l$ be the distinct maximal ideals of $\tilde{R}$ which are above $\frm$.
Choose $\overline{f}$, a monic irreducible polynomial in $\F_p[X]$ of degree $d$ coprime with $[(\tilde{R}/\frm_i):\F_p]$ for every $i=1,\cdots,l$ and such that $(\#k)^d\geq l$.
Observe that such $d$ is also coprime with $[k:\F_p]$ because each $\tilde{R}/\frm_i$ is a field extension of $k$.
Let $f$ be a monic lift of $\overline f$ to $\Z[X]$.
Note that $f$ is irreducible and of degree $d$.
Consider the order $S=\Z[X]/(f)$.
We know that as $f$ is irreducible modulo $p$ the prime  $p$ is inert in $S$.
In particular, 
\[ \frac{S}{pS}\simeq \frac{\F_p[X]}{(\overline f)}\simeq \F_{p^d}.\]
Define $T=R\otimes_{\Z} S$ and observe that $ T\simeq R[X]/(f) $.
We have
\[ \dfrac{S}{pS} \otimes_{\F_p} k \simeq \dfrac{k[X]}{(\tilde f)} \simeq \dfrac{R[X]}{(\frm,f)}, \]
where $\tilde f$ is the image of $f$ in $k[X]$.
Since the degrees of $S/pS$ and $k$ over $\F_p$ are coprime, we deduce that $\tilde f$ is irreducible in $k[X]$ and $(\frm,f)$ is a maximal ideal of $R[X]$.
Since the maximal ideals of $T$ are in bijection with the maximal ideals of $R[X]$ containing $(\frm,f)$, we deduce that $T$ is a local domain.
We will denote its unique maximal ideal by $\frM$.

Let $\tilde{T}$ be normalization of $T$.
With a similar argument we can show that $\tilde T$ is semilocal with maximal ideals corresponding to the maximal ideals $(\frm_i,f)$ of $\tilde R[X]$, for $i=1,\ldots,l$.

Observe that $T/\frM$ has $(\# k)^d$ elements, which is bigger than $l$.
Then we can apply Lemma~\ref{lemma:4} and we get
\[\dim_{(T/\frM)}\dfrac{I\otimes_\Z S}{\frM(I\otimes_\Z S)}\leq \dim_{(T/\frM)}\dfrac{\tilde{T}}{\frM\tilde{T}}.\]
Now observe that $I\otimes_\Z S= I\otimes_R T$ and using the canonical isomorphisms of tensor products we get
\[\dfrac{I\otimes_R T}{\frM(I\otimes_R T)}\simeq (I\otimes_R T)\otimes_T \dfrac{T}{\frM}\simeq I\otimes_R \dfrac{T}{\frM}\simeq I\otimes_R k\otimes_k \dfrac{T}{\frM}\simeq \dfrac{I}{\frm I}\otimes_k \dfrac{T}{\frM},\]
so
\[\dim_{(T/\frM)}\dfrac{I\otimes_\Z S}{\frM(I\otimes_\Z S)}=\dim_{(T/\frM)} \left(\dfrac{I}{\frm I}\otimes_k \dfrac{T}{\frM}\right) = \dim_k \dfrac{I}{\frm I}. \]
Similarly we have that
\[\dfrac{\tilde{T}}{\frM\tilde{T}}\simeq \tilde{T}\otimes_T\dfrac{T}{\frM}\simeq (\tilde{R}\otimes_R T)\otimes_T \dfrac{T}{\frM}\simeq \tilde{R}\otimes_R \dfrac{T}{\frM} \simeq \tilde{R}\otimes_R k\otimes _k \dfrac{T}{\frM} \simeq \dfrac{\tilde{R}}{\frm\tilde{R}}\otimes_k \dfrac{T}{\frM},\]
so also
\[\dim_{(T/\frM)}\dfrac{\tilde{T}}{\frM\tilde{T}} = \dim_k \dfrac{\tilde{R}}{\frm\tilde{R}}. \]
Finally, we conclude that
\[ \dim_k \dfrac{I}{\frm I} \leq \dim_k \dfrac{\tilde{R}}{\frm\tilde{R}}. \]
\end{proof}

\begin{corollary}
\label{cor:weakmainthm}
Let $R$ be a local number ring with maximal ideal $\frm$, residue field $k$ and normalization $\tilde{R}$, then $g(R)=\dim_k (\tilde{R}/\frm \tilde{R})$.
\end{corollary}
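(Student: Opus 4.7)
The corollary is an equality, so both directions need to be established. The upper bound combines Theorem \ref{thm:local3gp} with Nakayama's Lemma, and the lower bound follows by applying Nakayama to $\tilde R$ itself viewed as a fractional $R$-ideal.

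More precisely, for $g(R) \leq \dim_k(\tilde R/\frm \tilde R)$, I would take an arbitrary $R$-ideal $I$; by Lemma \ref{lemma:Nakayama} the minimal number of $R$-generators of $I$ equals $\dim_k(I/\frm I)$, and Theorem \ref{thm:local3gp} bounds this by $\dim_k(\tilde R/\frm \tilde R)$. Taking the supremum over all $R$-ideals yields the inequality.

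For the reverse direction $g(R) \geq \dim_k(\tilde R/\frm \tilde R)$, I need to exhibit a (possibly fractional) ideal that requires $\dim_k(\tilde R/\frm \tilde R)$ generators. Since $\tilde R$ has finite index in $R$ (recalled in Section \ref{sec:prelim}), it is a finitely generated $R$-module and hence a fractional $R$-ideal. By Lemma \ref{lemma:Nakayama} its minimal number of $R$-generators is exactly $\dim_k(\tilde R/\frm \tilde R)$, and by the first remark of Section \ref{sec:firstimpl} $g(R)$ bounds this quantity. If one prefers to remain inside $R$, any nonzero element $x$ in the conductor $\{r \in R : r\tilde R \subseteq R\}$ (which is nonzero because $\tilde R$ is module-finite over $R$) produces an honest $R$-ideal $x\tilde R$ that is $R$-module-isomorphic to $\tilde R$, so it demands the same number of generators.

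Main obstacle: essentially none at this stage, since all the analytic content has already been packed into Theorem \ref{thm:local3gp} via the residue-field extension trick that reduces to Lemma \ref{lemma:4}. The only subtlety worth flagging is that the natural lower-bound ideal $\tilde R$ is fractional rather than integral, which forces one either to invoke the fractional-ideal version of $g(R)$ from the first remark of Section \ref{sec:firstimpl} or to rescale into $R$ via an element of the conductor.
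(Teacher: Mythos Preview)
Your proposal is correct and matches the paper's own proof essentially line for line: Theorem \ref{thm:local3gp} plus Nakayama for the upper bound, and $\tilde R$ as a fractional $R$-ideal witnessing the lower bound (the paper does not bother rescaling via the conductor, but that is cosmetic). One slip of the pen: you wrote ``$\tilde R$ has finite index in $R$'' where you meant the reverse.
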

\begin{proof}
Let $r=\dim_k (\tilde{R}/\frm \tilde{R})$ and let $I$ be any $R$-ideal. By Theorem \ref{thm:local3gp} we obtain that $\dim_k (I/\frm I)\leq r$. As every number ring is Noetherian, we have that $I$ is finitely generated and hence we can apply Nakayama's Lemma to get that $I$ is generated by at most $r$ elements. Hence $g(R)\leq r$. Moreover observe that $\tilde{R}$ is a fractional $R$-ideal and we know that it is generated by exactly $r$  elements, so $g(R)=r$.
\end{proof}

The next theorem is due to Hendrik Lenstra.

\begin{theorem}
\label{thm:lenstra}
Let $k$ be a field and $A$ a $k$-algebra with $\dim_k A\geq 4$. Then exactly one of the following holds:
\begin{enumerate}[label=\upshape(\roman*), leftmargin=*, widest=iii]
\item \label{item:1} there exist $x,y\in A$ such that $\dim_k(k1 + kx + ky +kxy)\geq 4;$
\item \label{item:2} there exists a $k$-vector space $V$ with $A=k\oplus V$ and $V\cdot V=0;$
\item \label{item:3} there exists a $k$-vector space $V$ with $A\simeq
\begin{bmatrix}
k & V\\
0 & k
\end{bmatrix}$, that is $A=ke\oplus kf\oplus V$, with $V\cdot V=eV=Vf=0, e^2=e, f^2=f, ef=fe=0, e+f=1$.
\end{enumerate}
\end{theorem}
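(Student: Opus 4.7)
The plan is to verify the trichotomy in two parts: (a) check that (ii) and (iii) each exclude (i) by direct computation and that they are mutually exclusive, and (b) assume (i) fails and deduce that $A$ is of type (ii) or (iii).

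For (a): in case (ii), with $x = a+v$ and $y = b+w$ ($a, b \in k$ and $v, w \in V$), the vanishing $V \cdot V = 0$ yields $xy = ab + aw + bv = -ab + bx + ay \in k1 + kx + ky$; in case (iii), using the relations $eV = Vf = V\cdot V = 0$, $e^2 = e$, $f^2 = f$, $ef = fe = 0$, one computes that $x = \alpha e + \beta f + v$ and $y = \gamma e + \delta f + w$ satisfy $xy = \alpha\gamma e + \beta\delta f + \gamma v + \beta w = -\beta\gamma \cdot 1 + \gamma x + \beta y$. Hence (i) fails in both. Cases (ii) and (iii) are mutually exclusive because a case-(ii) algebra has only the trivial idempotents $0, 1$ (each $v \in V$ satisfies $v^2 = 0$, so any idempotent $a+v$ forces $v = 0$ and $a \in \{0,1\}$), whereas (iii) contains the nontrivial idempotent $e$.

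For (b), suppose $\dim_k(k1 + kx + ky + kxy) \leq 3$ for all $x, y \in A$. Taking $y = x^2$ gives $\dim_k(k1 + kx + kx^2 + kx^3) \leq 3$, so every $x \in A$ has minimal polynomial of degree $\leq 3$ over $k$. I would next show that for each $x \in A \setminus k1$ there is a unique scalar $c_\ell(x) \in k$ with $xy - c_\ell(x) y \in k1 + kx$ for every $y \notin k1 + kx$: since $\dim A \geq 4$, the quotient $A/(k1 + kx)$ has dimension $\geq 2$, so picking $y_1, y_2 \in A$ whose images there are linearly independent, writing $xy_i = \alpha_i + \beta_i x + \gamma_i y_i$, and expanding $x(y_1 + y_2)$ forces $\gamma_1 = \gamma_2$; an auxiliary-vector argument extends this to every admissible $y$. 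A symmetric scalar $c_r(x)$ governs right multiplication by $x$.

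The structural conclusion then splits according to whether $A$ contains a nontrivial idempotent. If it does not, I would first use the minimal polynomial bound together with the absence of idempotents to reduce to the case where $A/J(A) = k$, so that $A = k \oplus V$ with $V$ the nilpotent Jacobson radical. I would then show every $v \in V$ satisfies $v^2 = 0$: if some $v$ had $v^3 = 0$ but $v^2 \neq 0$, then applying $vy - c_\ell(v) y \in k1 + kv$ to $y = v^2$ forces $c_\ell(v) = 0$, and comparing the relation at $y$ and $y + v$ for $y \notin k1 + kv$ then puts $v^2 \in k1 + kv$, a contradiction. To pass from $v^2 = 0$ for every $v$ to $V \cdot V = 0$, take linearly independent $u, v \in V$ and write $uv = \beta u + \gamma v$ (the constant term vanishes because $uv$ is nilpotent); multiplying by $u$ and using $u^2 = 0$ gives $\beta\gamma u + \gamma^2 v = 0$, so $\gamma = 0$, and symmetrically $\beta = 0$. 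This yields case (ii). If $A$ contains a nontrivial idempotent $e$, I would study the Peirce decomposition $A = eAe \oplus eAf \oplus fAe \oplus fAf$ with $f = 1 - e$: combining the failure of (i) across pairs from different Peirce blocks with the coefficient functions $c_\ell, c_r$ and the minimal polynomial bound, one forces $eAe = ke$, $fAf = kf$, and the product of off-diagonal elements to vanish; a final argument shows that one of $eAf, fAe$ must itself be zero, yielding the upper-triangular form of (iii).

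I expect the main obstacle to be the Peirce analysis in the idempotent case: one must systematically exploit the failure of (i) across pairs of elements from different Peirce blocks to rule out $eAe$ or $fAf$ being larger than $k$ and to force one of the off-diagonal blocks to vanish, without committing a priori to which one is zero. A secondary subtlety appears in characteristic two in the no-idempotent analysis, where $v^2 = 0$ for every $v \in V$ does not polarize to $V \cdot V = 0$, so the bilinear cancellation argument from $u(uv) = u^2 v = 0$ is genuinely required rather than a mere re-derivation of the polarization identity.
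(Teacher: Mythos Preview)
Your part (a) is correct and essentially matches the paper's verification of mutual exclusion. Your scalar functions $c_\ell,c_r$ are exactly the paper's key invariants, there written $\lambda,\mu$.

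The divergence, and the gap, is in how you organize part (b) after defining them. The paper first sharpens your degree-$\le 3$ bound to $x^2\in k+kx$ for every $x$: compare the relation $xy\in k+kx+ky$ at $y$ and at $y+x$ to put $x^2\in k+kx+ky$, then intersect with a third direction $z\notin k+kx+ky$. This makes every subspace of $A$ containing $1$ a subring, and from that one reads off that $c_\ell$ and $c_r$ extend to $k$-algebra homomorphisms $A\to k$. A short direct computation then gives $\ker c_\ell\cdot\ker c_r=0$, and the dichotomy is simply whether the two kernels coincide (yielding (ii) with $V=\ker c_\ell$) or not (yielding (iii), with the idempotent $e$ chosen in $\ker c_\ell$ so that $c_r(e)=1$).

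Your dichotomy on nontrivial idempotents is a genuinely different organization, but as written it does not close. In the no-idempotent branch the assertion $A/J(A)=k$ is not justified: the hypothesis is only $\dim_k A\ge 4$, so $A$ may be infinite-dimensional, $J(A)$ need not be nilpotent, and idempotent lifting is unavailable; even in finite dimension you must still rule out $A/J(A)$ being a proper field extension of $k$, which a degree-$\le 3$ bound on single elements does not do by itself. In the idempotent branch the Peirce analysis is only a plan, and forcing $eAe=ke$, $fAf=kf$ together with the vanishing of one off-diagonal block amounts to re-deriving the ring-homomorphism property of $c_\ell,c_r$ by other means. The paper's route---upgrade $c_\ell,c_r$ to algebra maps via the sharper quadratic bound and then split on $\ker c_\ell=\ker c_r$---is shorter and sidesteps both difficulties.
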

\begin{proof}
Suppose that \ref{item:1} does not hold, which means that for every  $x,y\in A$ such that $x\not\in k$ and $y\not\in k+kx$ we have that $xy\in k1 + kx + ky$. First we claim that for every $x\in A$ we have $x^2\in k+kx$. Pick $y\not\in k+kx$. We have $xy\in k1 + kx + ky $ and $x(y+x)\in k1 + kx + k(y+x)=k1 + kx + ky;$ hence $x^2\in k1 + kx + ky$. We can use the same argument for $z\not\in k1 + kx + ky\supset k+kx$ (which exists because the dimension of $A$ over $k$ is $\geq 4$) and we get that $x^2\in k1 + kx + kz$, so $x^2\in (k1 + kx + ky) \cap (k1 + kx + kz)=k + kx$. From these considerations we get that every subspace $W\subset A$ containing $1$ is closed under multiplication, hence it is a ring.

Observe that each $x\in A$ acts by multiplication on the left on $A/(k+kx)$ and each vector is an eigenvector. This means that there is one eigenvalue and hence the action of $x$ is just a multiplication by a scalar. This means that there exists a unique $k$-linear morphism $\lambda: A\longrightarrow k$, such that $xy \equiv \lambda(x) y \mod (k+kx)$ for every $y\in A$. We can use the same argument for the action of $y$ on $A/(k+ky)$ and the action of $xy$ on $A/(k+kx+ky)$, which has dimension $>0$, by hypothesis. As all the actions are scalar on $A/(k+kx+ky)$  we get that $\lambda(x)\lambda(y)=\lambda(xy)$. As this works for every $x,y\in A$ then $\lambda:A\rightarrow k$ is a $k$-algebra morphism. We can use the same argument for the multiplication on the right, to get that there is a unique ring homomorphism $\mu:A\rightarrow k$ such that for every $x,z\in A$ we have $zx\equiv \mu(x)z \mod (k+kx)$. Then we get that $A=k +\ker \lambda = k+\ker \mu$, which also implies that the dimension over $k$ of the 
kernels is $\geq 3$.

Now we want to prove that $\ker\lambda \cdot \ker \mu =0$. For $x\in \ker\lambda$ and $y\in \ker \mu$ we have $xA\subset k+kx$ and $Ay\subset k+ky$. Observe that $xy\in xA\cap Ay$. If $k+kx\neq k+ky$ then $xA\cap Ay \subseteq k$ and as both $\lambda$ and $\mu$ are the identity on $k$ then $xy=\lambda(xy)=\lambda(x)\lambda(y)=0$. Otherwise if $k+kx=k+ky$, pick $z\in \ker\mu \setminus (k+kx)$, which is possible because $\dim_k \ker \mu\geq 3$. Then observe that $(k+kx)\cap (k+kz) = k$, so $xz\in xA\cap Az \subseteq k$. As $\mu$ is the identity on $k$, we have $xz=\mu(xz)=\mu(x)\mu(z)=0$. Similarly $x(y+z)\in xA\cap A(y+z)\subset (k+kx)\cap (k+k(y+z)) = (k+kx)\cap (k+kz) = k$, so also $x(y+z)=\mu(x(y+z))=\mu(x)(\mu(y)+\mu(z))=0$. Hence we get that $xy=0$.

Now we have to distinguish two cases. If $\ker \mu=\ker \lambda$ then, as $\lambda$ and $\mu$ agree on $k$, they coincide on the whole $A$. So we are in case \ref{item:2} with $V=\ker \mu = \ker \lambda$.
If $\ker \mu\neq \ker \lambda$, then call $V=\ker \mu \cap \ker \lambda$ which has exactly codimension $2$: as the kernels are different it must be strictly bigger than $1$ and it is strictly smaller than $3$ because $\ker \mu, \ker \lambda$ have codimension $1$. So the projections of $1,\ker\lambda,\ker\mu $ are 3 distinct lines in $A/V$. Hence: $\ker \lambda =k\cdot e + V$ where we choose $e$ with $\mu(e)=1$ (it can be done as $\mu$ maps surjectively onto $k$), $\ker \mu = k\cdot f + V$ where $f=1-e$. Observe that $ef=e(1-e)=(1-f)f=0$, because $e\in \ker \lambda$ and $f\in \ker \mu$. Then we obtain $e^2=e,f^2=f,fe=0$. Also $eV=Vf=0$. From this conditions we get that $A=ke\oplus kf \oplus V$, because $\ker \lambda= ke\oplus V$ has codimension $1$ and $f\not\in \ker \lambda. $ Then
\begin{align*}
A & \longrightarrow
\begin{bmatrix}
k & V\\
0 & k
\end{bmatrix}\\
ae+bf+v & \longmapsto 
\begin{pmatrix}
b & v\\
0 & a
\end{pmatrix}
\end{align*} 
is a well defined morphism and clearly it is bijective. So we are in case \ref{item:3}.\\
To conclude, observe that if \ref{item:2} holds then $A$ is a commutative algebra and in case \ref{item:3} $A$ is not. If $A$ has \ref{item:2} then it has not \ref{item:1}, because the subspace $k1+kx+ky$ is a ring and so $\dim_k(k1+kx+ky+kxy)\leq 3$. If $A$ has 
\ref{item:3} then it cannot have \ref{item:1}, because if 
$x=
\begin{pmatrix}
a & u\\
0 & b
\end{pmatrix}$ and
$y=
\begin{pmatrix}
c & v\\
0 & d
\end{pmatrix}$ then we have $(x-a)(y-d)=0$ and so $xy\in k+kx+ky$.
\end{proof}

\begin{proposition}
\label{prop:3}
Let $R$ be a local number ring, with maximal ideal $\frm$ and residue field $k$. Assume that $R'= \frm\tilde{R}+R$ is super-multiplicative, where $\tilde{R}$ is the normalization of $R$.
Then 
\[\dim _k \dfrac{\tilde{R}}{\frm \tilde{R}}\leq 3.\]
\end{proposition}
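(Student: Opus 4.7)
The plan is to argue by contradiction: assume $d := \dim_k(\tilde R/\frm \tilde R) \geq 4$ and construct two $R'$-ideals $I, J$ with $N(IJ) < N(I)N(J)$. The driving idea is Lenstra's Theorem (Theorem~\ref{thm:lenstra}) applied to the commutative $k$-algebra $A := \tilde R/\frm \tilde R$ of dimension $d \geq 4$.

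First I rule out cases (ii) and (iii) of that theorem. Case (iii) describes $A$ as the upper-triangular algebra $\bigl[\begin{smallmatrix} k & V \\ 0 & k \end{smallmatrix}\bigr]$, which is non-commutative unless $V=0$; since $A$ is commutative, $V=0$ and $\dim_k A = 2$, contradicting $d \geq 4$. In case (ii), $A = k \oplus V$ with $V \cdot V = 0$ makes $V$ a nilpotent maximal ideal so $A$ is local, forcing the semilocal Dedekind domain $\tilde R$ to be local above $\frm$ and hence a DVR. Writing $\frm \tilde R = \frm_1^e$ with $\tilde R/\frm_1 = A/V = k$ then gives $\dim_k A = e$, while $V^2 = 0$ forces $e \leq 2$, again contradicting $d \geq 4$. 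Thus only case~(i) survives.

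In case (i) pick $x, y \in A$ with $1, x, y, xy$ linearly independent over $k$, choose lifts $\tilde x, \tilde y \in \tilde R$, and fix any non-zero $\pi \in \frm$. Set
\[ I = \pi R' + \pi \tilde x R' = \pi M, \qquad J = \pi R' + \pi \tilde y R' = \pi N,\]
with $M = R' + \tilde x R'$ and $N = R' + \tilde y R'$; note $\pi \tilde x, \pi \tilde y \in \frm \tilde R \subseteq R'$, so these are ideals of $R'$. A direct computation gives $IJ = \pi^2 P$ with $P = R' + \tilde x R' + \tilde y R' + \tilde x \tilde y R'$. Because each of $M, N, P$ contains $\frm \tilde R$, it equals the full preimage in $\tilde R$ of its image in $A$; by the linear independence these images have $k$-dimensions $2, 2, 4$, so $l_{R'}(\tilde R/M) = l_{R'}(\tilde R/N) = d-2$ and $l_{R'}(\tilde R/P) = d-4$.

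The final step is a length calculation. Multiplication by $\pi$ is injective on $\tilde R$, giving $R'$-module isomorphisms $\pi^i \tilde R/\pi^i L \cong \tilde R/L$; combining this with $l_{R'}(\tilde R/R') = d - 1$ (from $\tilde R/R' \cong A/k$) and $l_{R'}(\tilde R/\pi^i \tilde R) = in$, where $n := l_{R'}(\tilde R/\pi \tilde R) \geq d$, I obtain $l(R'/I) = l(R'/J) = n - 1$ and $l(R'/IJ) = 2n - 3$. Therefore
\[ l(R'/IJ) = 2n - 3 < 2n - 2 = l(R'/I) + l(R'/J),\]
which by Proposition~\ref{prop:lengthproduct} means $N(IJ) < N(I)N(J)$, contradicting super-multiplicativity of $R'$. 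The main obstacle is the elimination of case (ii): the three cases of Lenstra's Theorem are incomparable in spirit, and it is the interaction between the commutativity of $A$ and the Dedekind structure of $\tilde R$ that collapses the trichotomy down to the single case amenable to the construction above; the length computation itself is then a short telescoping exercise around the one ``extra'' product generator $\tilde x \tilde y$.
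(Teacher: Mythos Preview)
Your proof is correct and follows essentially the same route as the paper: rule out cases~(ii) and~(iii) of Theorem~\ref{thm:lenstra} using commutativity and the Dedekind property of~$\tilde R$, and then in case~(i) build a counterexample to super-multiplicativity from lifts $\tilde x,\tilde y$ of the elements $x,y$. The only difference is cosmetic: the paper works directly with the fractional $R'$-ideals $M=R'+\tilde xR'$, $N=R'+\tilde yR'$ and $MN$, reading off $[M:R']=[N:R']=\#k$ and $[MN:R']=(\#k)^3$ from their images in $A/k$, while you multiply through by $\pi$ and $\pi^2$ to obtain genuine ideals of $R'$ and then do the longer length bookkeeping; after cancelling the contribution of $\pi$ the two computations coincide.
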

\begin{proof}
Put $A=\tilde{R}/\frm\tilde{R}$. Observe that $A$ is an $R$-module annihilated by the maximal ideal $\frm$, so it is a finite dimensional $k$-algebra. Assume by contradiction that $\dim_k A \geq 4$, so we are in one of the three cases of Theorem \ref{thm:lenstra}. As $\tilde{R}$ is commutative, then $A$ is the same, so we cannot be in case \ref{item:3}.
Assume that we are in case \ref{item:2}, that is $A=k\oplus V$, with $V$ a $k$-vector space such that $V^2=0$. Consider the projection $\tilde{R}\twoheadrightarrow A$ and let $\tilde{\frm}$ be the pre-image of $V$. Observe that $k=A/V\simeq \tilde{R}/\tilde{\frm}$, hence $\tilde{\frm}$ is a maximal ideal of $\tilde{R}$.
The ring $\tilde{R}$ is integrally closed so we have that $\dim_k (\tilde{\frm}/\tilde{\frm}^2)=1$. Therefore also $\dim_k (V/V^2)=\dim_k V =1$ as $V^2=0$. This implies that $\dim_k A=2$. Contradiction. 
Assume that we are in case \ref{item:1}.
Then there exist $\overline{x},\overline{y}\in A$ such that $\dim_k(k1+k\overline{x}+k\overline{y}+k\overline{xy})\geq 4$.
Let $x$ and $y$ be the preimages in $\tilde{R}$ of $\overline{x}$ and $\overline{y}$.
Now consider the $R'$-fractional ideals $I=(1,x,\frm \tilde{R})$ and $J=(1,y,\frm\tilde{R})$. Observe that $\tilde{R}/R'\simeq A/k$ and inside it we have $I/R'$ and $J/R'$ which are generated by the images of $x$ and $y$, respectively, so they corresponds to subspaces of dimension $1$ over $k$. The image of the product $IJ/R'$ is generated by the projections of $x,y$ and $xy$. Therefore it has dimension $\leq 3$ over $k$. Recalling our convention on the index of fractional ideals,  we have 
\[ (\#k)^{3}\geq [IJ:R']>[I:R'][J:R']=(\#k)^{2}.\]
But this contradicts the hypothesis that $R'$ is super-multiplicative. Therefore we must have $\dim_k A \leq 3$.
\end{proof}

Now to conclude the proof of Theorem \ref{thm:secondimpl} stated in the introduction, we need to return to the non-local case.

\begin{proof}[Proof of Theorem \ref{thm:secondimpl}]
Observe that by Lemma \ref{lemma:smlocal} we have that the localization of $R+\frm \tilde R$ at every maximal ideal $\frm$ is super-multiplicative. Then by Proposition \ref{prop:3} and Corollary \ref{cor:weakmainthm} we get that every $R_\frm$-ideal is generated by 3 elements, for every $\frm$. Then by Lemma \ref{lemma:d-gen} we have that every $R$-ideal is generated by 3 elements.
\end{proof}

Let us summarize what we proved: let $R$ be a number ring with normalization $\tilde R$ and consider the ring extensions of $R$ given by $R'(\frm) = R+\frm\tilde R$, where $\frm$ is a maximal ideal of $R$. Then
\begin{displaymath}
    \xymatrix{
g(R)\leq 3 \ar@2{->}[d]\ar@2{->}[r] & g(R'(\frm))\leq 3\ (\forall \frm) \ar@2{->}[d] \\
R \text{ super-mult. } & R'(\frm)\text{ super-mult.} \ (\forall \frm) \ar@2{->}[ul]
        }
\end{displaymath}
We cannot say that all the statement are equivalent because if $R$ is super-multiplicative then it is possible that there exists an extension $R'$ (of the required form) which is not, as we show in the next example, which was communicated by Hendrik Lenstra.
\begin{example}
 Let $p$ be a prime number. Let $\alpha$ be a root of a monic polynomial of degree $4$ with coefficients in $\Z_{(p)}$ which is irreducible modulo $p$. Let $A=\Z_{(p)}[\alpha]$. Observe that $A$ is a local domain with maximal ideal $pA$. Moreover $A$ is Noetherian and has Krull dimension $1$. Therefore $A$ is a discrete valuation ring and so it is integrally closed. Put $R'=\Z_{(p)}\oplus pA$ and $R=\Z_{(p)}\oplus p\alpha\Z_{(p)}\oplus p\alpha^2\Z_{(p)}\oplus p^2\alpha^3\Z_{(p)}$. Observe that $R'$ is the ring of Example \ref{ex:degree4} tensored with $\Z_{(p)}$, hence not super-multiplicative. Moreover, $R$ is a local subring of $R'$ with maximal ideal $\frm=p\Z_{(p)}\oplus p\alpha\Z_{(p)}\oplus p\alpha^2\Z_{(p)}\oplus p^2\alpha^3\Z_{(p)}$, normalization $A$ and residue class field $k=\F_p$. Notice that $R'$ can be described also as $R'=R+\frm A$. We will prove now that $R$ is super-multiplicative.
 
 First we look at the quotient $R/pR$. Let $x$ and $y$ be the images of $p\alpha$ and $p\alpha^2$ under the quotient map. Then $R/pR$ is a $k$-algebra of dimension $4$ with basis $1,x,y$ and $xy$. Moreover $R/pR$ is a local ring with maximal ideal $(x,y)$ and, from the relations $x^2=xy^2=0$, we see that the annihilator of $x$ in $R/pR$ is $kx+kxy$ and the annihilator of $(x,y)$ is $kxy$. Pulling back this statement to $R$, we obtain that $R\cap ((pR):\frm) = pR+p^2\alpha^3\Z_{(p)}$. But $((pR):\frm)$ is contained in $((pR):(pR))=R$, so $((pR):\frm)=pR+p^2\alpha^3\Z_{(p)}$. Dividing by $p$ we get $(R:\frm)=R+p\alpha^3\Z_{(p)}=R'$. In particular $(R:\frm)$ is a ring and $[(R:\frm):R]=p$.
 
 Now take two non-zero fractional $R$-ideals $I$ and $J$. We want to prove that $N(IJ)\geq N(I)N(J)$. Observe that multiplying by non-zero principal ideals of $R$ does not change the problem.
 By Lemma \ref{lemma:2} there exists $s$ in $I$ such that $IA=sA$. Then $R\subseteq (1/s)I \subseteq (1/s)IA = A$ so we can assume that $I$ contains $R$ and is contained in $A$, and similarly for the ideal $J$.
 If $I$ or $J$ equals $R$ the inequality holds (with equality). So we assume that both $I$ and $J$ properly contain $R$. In particular $N(I)$ and $N(J)$ are at most $1/p$.
 Then $I/R$ and $J/R$ are finite non-zero $R$-modules and have therefore a non-trivial piece annihilated by $\frm$.
 Hence $I\cap (R:\frm)$ contains $R$ properly and using the fact that $[(R:\frm):R]=p$ we obtain that $I\supset (R:\frm)$.
 The same holds for $J$. Suppose first that $N(I)=1/p$, then $I=(R:\frm)$ and so $IJ=J$. Then the inequality is valid: $N(IJ)=N(J)>N(J)/p=N(I)N(J)$. Likewise if $N(J)=1/p$. It remains to check the case when both $I$ and $J$ have norm at most $1/p^2$. In this case the inclusion $IJ\subset A$ implies $N(IJ)\ge N(A) = 1/[A:R]=1/p^4 \ge N(I)N(J)$, as required.
\end{example}
 
\section*{Acknowledgement}
This article is the result of the work done for my Master Thesis written at the University of Leiden under the supervision of Bart de Smit.
I thank him for his guidance and suggestions.
I am grateful to Hendrik Lenstra for his help in some key passages and for his comments.
I would also like to thank Jonas Bergstr\"om for his help in revising previous versions of the article.
The author would also like to express his gratitude to the Max Planck Institute for Mathematics in Bonn for their hospitality.

\end{document}